\documentclass[12pt, reqno]{amsart}

\usepackage[utf8]{inputenc}
\usepackage{amssymb, amsmath, amsthm,amsfonts}

\usepackage{mathtools}
\usepackage{hyperref}
\usepackage{amsrefs}
\usepackage{amscd}  
\usepackage{fullpage}
\usepackage{stmaryrd}
\usepackage{mathrsfs}
\usepackage[all]{xy} 
\usepackage{MnSymbol}
\usepackage{multirow} 
\usepackage{array}
\usepackage{faktor}


\usepackage[usenames,dvipsnames]{xcolor}
\usepackage{tikz}
\usepackage{cleveref}

\usepackage{amsmath,amssymb,amsfonts,amsthm}
\usepackage{url,pdflscape,bigstrut,enumerate,enumitem,comment}
\usepackage{hyperref,booktabs,placeins}
\usepackage[dvipsnames]{xcolor}
\hypersetup{colorlinks=true,urlcolor=blue,citecolor=blue,linkcolor=blue}
\usepackage{tikz,pgfplots,pgfplotstable}
\usepackage{fullpage}

\newcommand{\Z}{\mathbb{Z}}
\newcommand{\Q}{\mathbb{Q}}

\usepgfplotslibrary{colorbrewer}
\pgfplotsset{cycle list/Dark2}

\pgfplotsset{compat=1.16}



\newtheorem{lemma}{Lemma}
\newtheorem{theorem}[lemma]{Theorem}

\newtheorem{cor}[lemma]{Corollary}
\newtheorem{conj}[lemma]{Conjecture}

\newtheorem{claim*}{Claim}

\theoremstyle{definition}
\newtheorem{remark}[lemma]{Remark}


\DeclareMathOperator{\im}{\mathrm{Im}}

\DeclareMathOperator{\re}{\mathrm{Re}}
\DeclareMathOperator{\Res}{\mathrm{Res}}

\newcommand{\e}{\epsilon}
\newcommand{\E}{\mathcal{E}}

\newcommand{\LL}{\Lambda}

\newcommand{\QQ}{\mathbb{Q}}

\newcommand{\ZZ}{\mathbb{Z}}
\newcommand{\FF}{\mathbb{F}}
\newcommand{\bO}{\mathcal{O}}

\newcommand{\ldl}{\frac{L'_{E}(s)}{L_{E}(s)}}



\newcommand{\rank}{\text{rank}}

\numberwithin{equation}{section}
\numberwithin{table}{section}



\title{From the Birch and Swinnerton-Dyer conjecture to Nagao's conjecture}

\author[1]{Seoyoung Kim}
\thanks{Research of the first author partially supported by a Coleman Postdoctoral Fellowship.}
\email{\textcolor{blue}{\href{mailto:sk206@queensu.ca}{sk206@queensu.ca}}}
\address{Department of Mathematics and Statistics, Queen's University, Kingston, ON, Canada  K7L 3N6}

\author[2]{M. Ram Murty \\ With an appendix by Andrew V. Sutherland}
\thanks{Research of the second author partially supported by NSERC Discovery grant.}
\email{\textcolor{blue}{\href{mailto:murty@queensu.ca}{murty@queensu.ca}}}
\address{Department of Mathematics and Statistics, Queen's University, Kingston, ON, Canada  K7L 3N6}






\begin{document}

\date{\today}
\maketitle

\begin{abstract}
Let $E$ be an elliptic curve over $\mathbb{Q}$ with discriminant $\Delta_E$. For primes $p$ of good reduction, let $N_p$ be the number of points modulo $p$ and write $N_p=p+1-a_p$. In 1965, Birch and Swinnerton-Dyer formulated a conjecture which implies
$$\lim_{x\to\infty}\frac{1}{\log x}\sum_{\substack{p\leq x\\ p\nmid \Delta_{E}}}\frac{a_p\log p}{p}=-r+\frac{1}{2},$$
where $r$ is the order of the zero of the $L$-function $L_{E}(s)$ of $E$ at $s=1$, which is predicted to be the Mordell-Weil rank of $E(\mathbb{Q})$. We show that if the above limit exits, then the limit equals $-r+1/2$. We also relate this to Nagao's conjecture.
\end{abstract}
\section{Introduction}
Let $E$ be an elliptic curve over $\QQ$ with discriminant $\Delta_E$ and conductor $N_E$. For each prime $p\nmid \Delta_E$, we write the number of points of $E\pmod{p}$ as
\begin{equation}
    N_{p}\coloneqq \#E(\FF_p)=p+1-a_p,
\end{equation}
where $a_p$ satisfies Hasse's inequality $|a_p|\leq 2\sqrt{p}$. For $p\mid \Delta_E$, we define $a_p=0, -1,$ or $1$ according as $E$ has additive reduction, split multiplicative reduction, or non-split multiplicative reduction at $p$ (For precise definitions of this terminology, we refer the reader to \cite{Sil1}*{p. 449}).\\

The $L$-function attached to $E$, denoted as $L_{E}(s)$ is then defined as an Euler product using this datum:
\begin{equation}
\label{L-function}
L_{E}(s)=\prod_{p\mid \Delta_E}\left(1-\frac{a_p}{p^s}\right)^{-1}\prod_{p\nmid \Delta_{E}}\left(1-\frac{a_{p}}{p^s}+\frac{p}{p^{2s}}\right)^{-1},
\end{equation}
which converges absolutely for $\re(s)>3/2$ by virtue of Hasse's inequality. Expanding the Euler product into a Dirichlet series, we write
\begin{equation}
    L_{E}(s)=\sum^{\infty}_{n=1}\frac{a_n}{n^s}.
\end{equation}
If we write $\alpha_p,\beta_p$ as the eigenvalues of the Frobenius morphism at $p$, for $p\nmid \Delta_E$, we can write $a_p=\alpha_p+\beta_p$, and our $L$-function can be re-written as 
\begin{equation}
    L_{E}(s)=\prod_{p\mid \Delta_E}\left(1-\frac{a_p}{p^s}\right)^{-1}\prod_{p\nmid \Delta_E}\left(1-\frac{\alpha_p}{p^s}\right)^{-1}\left(1-\frac{\beta_p}{p^s}\right)^{-1},
\end{equation}
and by Hasse's inequality $|\alpha_p|=|\beta_p|=\sqrt{p}$.\\

By the elliptic modularity theorem for semistable elliptic curves over $\mathbb{Q}$ by Wiles \cite{Wiles}, and its complete extension to all elliptic curves over $\mathbb{Q}$ by Breuil, Conrad, Diamond, and Taylor \cite{BCDT}, $L_{E}(s)$ extends to an entire function and satisfies a functional equation which relates $L_{E}(s)$ to $L_{E}(2-s)$. More precisely, if we define
\begin{equation}
    \LL_{E}(s)=N_E^{s/2}(2\pi)^{-s}\Gamma(s)L_{E}(s),
\end{equation}
then $\LL_{E}(s)$ is entire and satisfies the following functional equation:
\begin{equation}
\label{functionalequation}
    \LL_{E}(s)=w_{E}\LL_{E}(2-s),
\end{equation}
where $w_{E}\in\{1,-1\}$ is the root number of $E$. Since $\Gamma(s)$ has simple poles at $s=0,-1,\ldots$, $L_{E}(s)$ has trivial zeros at $s=0,-1,\ldots$. Thus, for $m=0,1,2,\cdots$, we have
\begin{equation}
    \underset{s=-m}{\Res}~\frac{L_{E}'(s)}{L_{E}(s)}=\underset{s=-m}{\Res}\left[\frac{\LL_{E}'(s)}{\LL_{E}(s)}-\frac{\Gamma'(s)}{\Gamma(s)}\right].
\end{equation}

This summarises our study of $E$ from the analytic perspective.\\

From the algebraic perspective, a celebrated theorem of Mordell states that the group of rational points $E(\QQ)$ is a finitely generated abelian group with rank $r_{M}$. In 1965, Birch and Swinnerton-Dyer \cite{BSD} conjectured that $L_{E}(s)$ has a zero of order $r$ at $s=1$. In other words, the algebraic rank $r_{M}$ equals the ``analytic rank" reflected as the order of zero at $s=1$ of $L_E(s)$. This conjecture is often referred to as the Birch and Swinnerton-Dyer conjecture.\\

However, before they formulated this conjecture in this form, Birch and Swinnerton-Dyer stated a stronger conjecture motivated by a heuristic ``local-global" principle: the rank should be reflected by ``modulo $p$" information for many primes $p$. More precisely, they conjectured that there is a constant $C_{E}$ such that
\begin{equation}
\label{OBSD}
    \underset{\substack{p<x \\ p\nmid \Delta_{E}}}\prod \frac{N_{p}}{p} \sim C_{E}(\log x)^{r},
\end{equation}
as $x\to\infty$. We refer to this as the original Birch and Swinnerton-Dyer conjecture (or OBSD for short).\\

Several authors have noted that the ``severity" of this conjecture in that it implies the analog of the Riemann hypothesis for $L_E(s)$, and much more. This was first announced by Goldfeld \cite{Goldfeld}. Kuo and Murty \cite{Kuo-Murty}*{Theorem 2, Theorem 3}, and K. Conrad \cite{Conrad}*{Theorem 1.3} independently noticed that \eqref{OBSD} goes well beyond the analog of the Riemann hypothesis for $L_E(s)$. They proved that \eqref{OBSD} is true if and only if
\begin{equation}
    \sum_{\substack{p^k\leq x\\p\nmid \Delta_E}}\frac{\alpha^k_p+\beta^k_p}{k}=o(x),
\end{equation}
as $x\to\infty$, or equivalently
\begin{equation}
\sum_{\substack{p^k\leq x \\ p\nmid\Delta_{E}}}(\alpha^{k}_{p}+\beta^{k}_{p})\log p=o(x\log x),
\end{equation}
as $x\to\infty$, whereas, the Riemann hypothesis for $L_E(s)$ is equivalent to the weaker assertion
\begin{equation}
\label{OBSD1}
\sum_{\substack{p^k\leq x\\p\nmid \Delta_{E}}}(\alpha^{k}_{p}+\beta^{k}_{p})\log p =\mathcal{O}(x(\log x)^2),    
\end{equation}
as $x\to\infty$.\\

If we return to the heuristic ``local-global" principle that perhaps motivated Birch and Swinnerton-Dyer to make their OBSD conjecture, we are led to formulate several gradations of their conjecture.\\

The first is that \eqref{OBSD} is equivalent to 
\begin{equation}
    \sum_{\substack{p^k\leq x \\ p\nmid \Delta_E}}\frac{\alpha^k_p+\beta^k_p}{kp^k}=-r\log\log x + A +o(1),
\end{equation}
for some constant $A$, as $x\to\infty$. If we weight each prime power $p^k$ by $\log p$ (following Chebysheff), we have that \eqref{OBSD} implies
\begin{equation}
    \sum_{\substack{p^k\leq x \\ p\nmid \Delta_E}}\frac{\alpha^k_p+\beta^k_p}{kp^k}\log p=-r\log x+\bO(1),
\end{equation}
as $x\to\infty$. In fact, this is equivalent to \eqref{OBSD}. However, the weaker assertion
\begin{equation}
    \sum_{\substack{p^k\leq x \\ p\nmid \Delta_E}}\frac{\alpha^k_p+\beta^k_p}{kp^k}\log p=-r\log x+o(\log x)
\end{equation}
already implies the analog of the Riemann hypothesis for $L_E(s)$ and still leads to an analytic determination of the rank $r$ using the ``local" data $a_p$.\\

There are good reasons to believe that \eqref{OBSD1} is not the optimal estimate. Montgomery \cite{Montgomery} and Gallagher \cite{Gallagher} have suggested in the context of the Riemann zeta function (but here applied to our context) that the error in \eqref{OBSD1} should be
\begin{equation}
    \bO(x(\log\log\log x)^2)
\end{equation}
or even the ``weaker'' $\bO(x(\log\log x)^2)$.\\

In either cases, it is conceivable that OBSD is true, even though it goes well beyond the analog of the Riemann hypothesis for $L_E(s)$.\\

The purpose of this note is to show that if the limit
\begin{equation}
    \lim_{x\to \infty}\frac{1}{\log x}\sum_{\substack{p^k\leq x \\ p\nmid \Delta_E}}\frac{\alpha^k_p+\beta^k_p}{kp^k}\log p
\end{equation}
exists, then the limit is $-r$. Moreover, this implies that if the limit
\begin{equation}
\lim_{x\to\infty}\frac{1}{\log x}\sum_{p<x}\frac{a_p\log p}{p}
\end{equation}
exists, then the Riemann hypothesis for $L_{E}(s)$ is true, and the limit is $-r+1/2$. We apply a technique of Cram\'er \cite{Cramer} to prove our theorem.

\section{Preliminaries}
For an elliptic curve $E$ defined over $\QQ$ with discriminant $\Delta_E$ and conductor $N_E$, we defined its $L$-function $L_{E}(s)$ in \eqref{L-function}. Hence, we can write its logarithmic derivative as
\begin{equation}
    -\frac{L'_{E}(s)}{L_{E}(s)}=\sum^{\infty}_{n=1}\frac{c_n\LL(n)}{n^s},
\end{equation}
where $\Lambda(n)$ is the von Mangoldt function, and 
\begin{equation}
\label{c_n}
c_{n} = \left\{ \begin{array}{ll}
\alpha^{m}_{p}+\beta^{m}_{p}, & \textrm{if $n=p^{m}$ and $p\nmid N$,}\\
a^{m}_{p}, & \textrm{if $n=p^m$ and $p\mid N$,}\\
0, & \textrm{otherwise.}
\end{array} \right.
\end{equation}
Let $\mathcal{C}$ be the (oriented) rectangle with vertices $c-iR,c+iR,-U+iR,-U-iR$, and its edges are denoted by $I_1, I_2, I_3$, and $I_4$ respectively, with $c=2$, an odd positive integer $R$ ($R$ is chosen so that it is not the ordinate of a zero of $L_E(s)$), and $U$ a positive non-integral number, we have by the Cauchy residue theorem,
\begin{align}
    \frac{1}{2\pi i}\int_{\mathcal{C}}-\frac{L'_{E}(s)}{L_{E}(s)}\frac{x^s}{s}ds &=-\underset{|\gamma|<R}{\sum}n_{\rho}\frac{x^{\rho}}{\rho}-\sum_{m<U}\underset{s=-m}\Res\left[\frac{L_{E}'(s)}{L_{E}(s)}\frac{x^s}{s}\right],
\end{align}
where the sum is over all zeros $\rho=\beta+i\gamma$ of $L_{E}(s)$, and $n_{\rho}$ is the multiplicity of each $\rho$. For any $n\geq 1$, Hence, by (truncated) Perron's formula, for $x$ not an integer, we get
\begin{align}
    \sum_{n\leq x}c_{n}\LL(n)&=\frac{1}{2\pi i}\int_{2-iR}^{2+iR} -\frac{L'_{E}(s)}{L_{E}(s)}\frac{x^s}{s}ds+\mathcal{O}\left(~\sum^{\infty}_{n=1}\left(\frac{x}{n}\right)^{2}|c_{n}\Lambda(n)|\cdot \min \left(1,\frac{1}{R|\log \frac{x}{n}|}\right)\right)\\
    &=-\underset{|\gamma|<R}{\sum}n_{\rho}\frac{x^{\rho}}{\rho}-\sum_{m<U}\underset{s=-m}\Res\left[\frac{L_{E}'(s)}{L_{E}(s)}\frac{x^s}{s}\right]+\frac{1}{2\pi i}\int_{\mathcal{C}\backslash I_1}\frac{L'_{E}(s)}{L_{E}(s)}\frac{x^s}{s}ds\\
    &\quad\quad\quad\quad\quad\quad\quad\quad\quad\quad\quad\quad+\mathcal{O}\left(~\sum^{\infty}_{n=1}\left(\frac{x}{n}\right)^{2}|c_{n}\Lambda(n)|\cdot \min \left(1,\frac{1}{R|\log \frac{x}{n}|}\right)\right), \label{error1}\\
    &=-\underset{|\gamma|<R}{\sum}n_{\rho}\frac{x^{\rho}}{\rho}+\sum_{m<U}\frac{x^{-m}}{m}+\frac{1}{2\pi i}\int_{\mathcal{C}\backslash I_1}\frac{L'_{E}(s)}{L_{E}(s)}\frac{x^s}{s}ds+E(x,U,R), \label{cauchy-integral}
\end{align}
where the first sum is over all nontrivial zeros $\rho=1+i\gamma$ of $L_{E}(s)$, and $n_{\rho}$ is the multiplicity of each $\rho$. $E(x,U,R)$ is the error term arising from the last term in \eqref{error1}. We first estimate the error term in \eqref{error1} and \eqref{cauchy-integral}:
$$E(x,U,R)=\mathcal{O}\left(~\sum^{\infty}_{n=1}\left(\frac{x}{n}\right)^{2}|c_{n}\Lambda(n)|\cdot \min \left(1,\frac{1}{R|\log \frac{x}{n}|}\right)\right).$$ 
We consider the following three parts of the sum in $E(x,U,R)$ separately:
\begin{align}
\label{error-estimation1}
    \left(\sum_{n<x/2}+\sum_{x/2<n<2x}+\sum_{n>2x}\right)\left(\frac{x}{n}\right)^{2}|c_{n}\Lambda(n)|\cdot \min \left(1,\frac{1}{R|\log \frac{x}{n}|}\right).
\end{align}
The first sum is for $n<x/2$, and hence we have $\log(x/n)>\log 2$, and
\begin{equation}
\label{error-estimation2}
    \sum_{n<x/2}\left(\frac{x}{n}\right)^{2}|c_{n}\Lambda(n)|\cdot \min \left(1,\frac{1}{R|\log \frac{x}{n}|}\right)=\bO\left(\frac{x^2}{R}\sum_{n<x/2}\frac{|c_n\Lambda(n)|}{n^2}\right)=\bO\left(\frac{x^2}{R}\right),
\end{equation}
for big enough $R$. Similarly, for the third sum of \eqref{error-estimation1}, the condition $n>2x$ implies $|\log(x/n)|>\log 2$, and hence
\begin{equation}
\label{error-estimation3}
    \sum_{n>2x}\left(\frac{x}{n}\right)^{2}|c_{n}\Lambda(n)|\cdot \min \left(1,\frac{1}{R|\log \frac{x}{n}|}\right)=\bO\left(\frac{x^2}{R}\right).
\end{equation}
Now, it remains to compute the second sum of \eqref{error-estimation1}
$$\sum_{x/2<n<2x}\left(\frac{x}{n}\right)^{2}|c_{n}\Lambda(n)|\cdot \min \left(1,\frac{1}{R|\log \frac{x}{n}|}\right).$$
We observe that $x/2<n<2x$ implies
$$-2<-\frac{x}{n}<\frac{x-n}{n}<\frac{x}{2n}<1,$$
and therefore
\begin{align}
\sum_{x/2<n<2x}\left(\frac{x}{n}\right)^{2}|c_{n}\Lambda(n)|\cdot \min \left(1,\frac{1}{R|\log \frac{x}{n}|}\right)=\bO\left(\sum_{x/2<n<2x}|c_n\Lambda(n)|\cdot\frac{n}{R|x-n|}\right).
\end{align}
We choose $x=N+1/2$, an integer plus half, then $x-n$ ranges
$$-N-\frac{1}{2}<x-n<0,$$
and thus, by denoting $j=|x-n|$, the sum can be rewritten by
\begin{equation}
    \sum_{x<n<2x}\frac{1}{|x-n|}=\sum_{j=1}^{2N+1}\frac{1}{j/2}=\leq\log(2N+1).
\end{equation}
One can similarly treat the range $x/2<n<x$ to deduce
\begin{equation}
\label{error-estimation4}    
    \sum_{x/2<n<2x}\left(\frac{x}{n}\right)^{2}|c_{n}\Lambda(n)|\cdot \min \left(1,\frac{1}{R|\log \frac{x}{n}|}\right)=\bO\left(\frac{2x\sqrt{x}(\log x)^2}{R}\right).
\end{equation}
In conclusion, we obtain from \eqref{error-estimation2}, \eqref{error-estimation3}, and \eqref{error-estimation4},
$$E(x,U,R)=\bO\left(\frac{x^2}{R}\right),$$
for suitable choice of $x$. The third term in \eqref{cauchy-integral}
$$\frac{1}{2\pi i}\int_{\mathcal{C}\backslash I_1}\frac{L'_{E}(s)}{L_{E}(s)}\frac{x^s}{s}ds$$
can be estimated with the usual method. We consider the following part of the sum from primes of good reduction:
\begin{equation}
    \psi_{E}(t)=\sum_{n\leq t}c_{n}\Lambda(n),
\end{equation}
and similar to the estimation of Cram\'er \cite{Cramer}, we have the following result:

\begin{theorem}
\label{main1}
Assuming the Riemann hypothesis for $L_{E}(s)$ is true, we obtain
\begin{equation}
    \lim_{x\to\infty}\frac{1}{\log x}\int^{x}_{2}\frac{\psi^{2}_{E}(t)}{t^3}dt=\sum_{\rho}\left|\frac{n_{\rho}}{\rho}\right|^2,
\end{equation}
where the sum is over all nontrivial zeros $\rho$ of $L_{E}(s)$, and $n_{\rho}$ is the multiplicity of each $\rho$.
\end{theorem}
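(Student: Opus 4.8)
The approach mirrors Cramér's mean-value argument for $\zeta(s)$, transplanted to $L_{E}(s)$; the only structural change is that, under the Riemann hypothesis, the nontrivial zeros lie on $\re(s)=1$, so each may be written $\rho=1+i\gamma$ and hence $x^{\rho}=x\cdot x^{i\gamma}$. I would first record, from the truncated explicit formula assembled in Section~2, the expansion
\[
\frac{\psi_{E}(x)}{x}=-\sum_{|\gamma|<T}\frac{n_{\rho}\,x^{i\gamma}}{\rho}+\mathcal{E}(x,T),
\]
where $\mathcal{E}(x,T)$ gathers the Perron error $E(x,U,R)$, the trivial-zero residues, and the contour integral over $\mathcal{C}\setminus I_{1}$, all divided by $x$. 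After the substitution $t=e^{u}$ and writing $X=\log x$, the quantity to be evaluated becomes $\frac{1}{X}\int_{\log 2}^{X}g(u)^{2}\,du$ with $g(u)=\psi_{E}(e^{u})e^{-u}$; note that $g$ is real-valued, since the $c_{n}$ are real and the zeros occur in conjugate pairs. The target is thus the logarithmic mean square of $g$.

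Expanding the square of the truncated zero-sum yields $\sum_{\rho,\rho'}\frac{n_{\rho}n_{\rho'}}{\rho\,\overline{\rho'}}e^{i(\gamma-\gamma')u}$. Integrating over $u\in[\log 2,X]$ and dividing by $X$, the diagonal terms $\gamma=\gamma'$ produce $\sum_{|\gamma|<T}\bigl|\tfrac{n_{\rho}}{\rho}\bigr|^{2}+o(1)$, which converges to the claimed constant $\sum_{\rho}|n_{\rho}/\rho|^{2}$ as $T\to\infty$; the convergence is guaranteed by the Riemann--von Mangoldt bound $\#\{\rho:0<\gamma\le T\}=\bO(T\log T)$, whence $\sum_{\rho}|\rho|^{-2}=\sum_{\gamma}(1+\gamma^{2})^{-1}<\infty$. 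The contribution of $\mathcal{E}(x,T)$ is then controlled by Cauchy--Schwarz, once one shows $\frac{1}{X}\int_{\log 2}^{X}\mathcal{E}(e^{u},T)^{2}\,du\to 0$; this is where the bound $E(x,U,R)=\bO(x^{2}/R)$ and the estimates for the remaining edges of $\mathcal{C}$ are invoked, and it is this step that forces the truncation height $T=T(x)$ to grow with $x$.

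The crux---and the step I expect to be the main obstacle---is to show that the off-diagonal terms contribute $o(1)$ after division by $X$. Each such term equals $\frac{n_{\rho}n_{\rho'}}{\rho\,\overline{\rho'}}\int_{\log 2}^{X}e^{i(\gamma-\gamma')u}\,du$, and the inner integral is bounded in modulus by $\min\!\bigl(X,\,2/|\gamma-\gamma'|\bigr)$. For well-separated zeros this gives, after dividing by $X$, the bound $\frac{1}{|\rho||\rho'|}\min\!\bigl(1,\,2/(X|\gamma-\gamma'|)\bigr)$, which is summable against $\sum_{\rho}|\rho|^{-2}$ and tends to $0$. The delicate part is the near-diagonal pairs, where $|\gamma-\gamma'|$ is so small that this estimate is merely $\bO(1)$ and does not decay; here one cannot pass to absolute values but must retain the oscillatory factor and extract cancellation, exactly as in Cramér's treatment. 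Coordinating the truncation height $T=T(x)$ so that it simultaneously annihilates $\mathcal{E}(x,T)$ and keeps the off-diagonal sum $o(X)$, and then letting $X\to\infty$ before $T\to\infty$---an interchange legitimized by dominated convergence under $\sum_{\rho}|\rho|^{-2}<\infty$---yields the stated limit.
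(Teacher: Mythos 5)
Your setup coincides with the paper's proof up to a change of notation: the substitution $t=e^{u}$ and the pairing $\rho\leftrightarrow\overline{\rho'}$ are just the logarithmic form of the paper's expansion of $\psi_{E}^{2}(t)/t^{3}$, with the diagonal $\gamma=\gamma'$ corresponding to the paper's $\rho'=2-\rho$ (under RH these are the same pairs), and your treatment of the diagonal and of the well-separated off-diagonal pairs matches the paper's. The genuine gap is at exactly the step you yourself flag as the crux: for the near-diagonal pairs ($0<|\gamma-\gamma'|<\eta$ in your notation, $0<|\gamma+\gamma'|<\eta$ in the paper's) you give no argument at all, only the assertion that one ``cannot pass to absolute values but must retain the oscillatory factor and extract cancellation, exactly as in Cram\'er's treatment.'' Deferring the decisive estimate to an unspecified cancellation mechanism is not a proof, and your closing appeal to dominated convergence ``under $\sum_{\rho}|\rho|^{-2}<\infty$'' does not repair it: the only $X$-uniform bound available on a near-diagonal term is $|\rho\rho'|^{-1}$, and the summability of $\sum|\rho\rho'|^{-1}$ over near-diagonal pairs does not follow from $\sum_{\rho}|\rho|^{-2}<\infty$ alone, since a priori a single zero could have very many near-coincident partners.

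Moreover, your diagnosis points in the wrong direction: no cancellation between terms is needed. The paper handles this block entirely with absolute values. Fix $T$ and set $\Delta=\min\{|\gamma+\gamma'|:|\gamma|\le T\}$, which is positive because only finitely many zeros lie below height $T$; the pairs with $|\rho|,|\rho'|\le T$ then contribute $\ll\frac{1}{\Delta}\sum_{|\rho|,|\rho'|\le T}|\rho\rho'|^{-1}=\mathcal{O}_{T}(1)$, while the pairs with $|\rho|\ge T$ are bounded trivially by $(\log x)\,\epsilon_{T}$, where $\epsilon_{T}$ is the tail of the convergent series $\sum_{\text{near-diag}}|\rho\rho'|^{-1}$. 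Dividing by $\log x$, letting $x\to\infty$ and then $T\to\infty$ kills both contributions, as in \eqref{sums-with-delta}. The ingredient that makes $\epsilon_{T}$ finite---and that is also what actually justifies your ``summable'' claim for the well-separated sums, i.e.\ the convergence of \eqref{1} and \eqref{2}---is the zero-counting estimate $N_{E}(T+1)-N_{E}(T)\ll\log T$ from Selberg's theorem (Theorem \ref{Selberg}): each zero has only $\mathcal{O}(\log|\gamma|)$ partners within distance $\eta$, whence $\sum_{\text{near-diag}}|\rho\rho'|^{-1}\ll\sum_{\rho}\log(|\gamma|+2)/|\rho|^{2}<\infty$. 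This density input appears nowhere in your proposal, and without it neither the near-diagonal block nor the interchange of limits you invoke can be justified.
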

\begin{proof}
From \eqref{error-estimation1}, and following estimations, we have
\begin{align*}
    \frac{\psi^2_{E}(t)}{t^{3}}& =\frac{1}{t^{3}}\left(\sum_{\rho}n_{\rho}\frac{t^{\rho}}{\rho}+\bO\left(\frac{t^2}{R}\right)\right)^2\\
    &=\sum_{\rho}n_{\rho}\sum_{\rho'}n_{\rho'}\frac{t^{\rho+\rho'-3}}{\rho\rho'}+\bO\left(\frac{t}{R^2}\right)+\bO\left(\frac{1}{R}\left|\sum_{\rho}n_{\rho}\frac{t^{\rho-1}}{\rho}\right|\right)\\
    &=\sum_{\rho}n_{\rho}\sum_{\rho'}n_{\rho'}\frac{t^{\rho+\rho'-3}}{\rho\rho'}+\bO\left(\frac{t}{R^2}\right)+\bO\left(\frac{t^{3/2}(\log t)^2}{R}\right),
\end{align*}
where the sums involving $\rho$ and $\rho'$ are taken over the zeros of $L_E(s)$ satisfying $|\im(\rho)|<R$ and $|\im(\rho')|<R$ (For simplifying the notation, we will drop these constraints throughout the proof), we assume the same condition for such sums throughout the proof. Thus, using the relation $\rho(2-\rho)=|\rho|^2$ and $\overline{\rho}=2-\rho$, and by choosing large enough $R$ in \eqref{cauchy-integral}, we obtain
\begin{align*}
    \int^{x}_{2}\frac{\psi^2_{E}(t)}{t^3}dt &=\sum_{\rho}\frac{n_{\rho}}{\rho}\sum_{\rho'}\frac{n_{\rho'}}{\rho'}\int^{x}_{2} t^{\rho+\rho'-3}dt+\bO(1)\\
    &=\sum_{\rho}\frac{n_{\rho}}{\rho}\left[ \sum_{\rho'=2-\rho} \frac{n_{\rho'}}{\rho'}\int^{x}_{2} t^{\rho+\rho'-3}dt+ \sum_{\rho'\neq 2-\rho}\frac{n_{\rho'}}{\rho'}\int^{x}_{2} t^{\rho+\rho'-3}dt\right]+\bO(1)\\
    &=\log x \sum_{\rho}\left|\frac{n_{\rho}}{\rho}\right|^2+\sum_{\rho}\frac{n_{\rho}}{\rho}\sum_{\rho'\neq 2-\rho}\frac{n_{\rho'}}{\rho'}\cdot\frac{x^{\rho+\rho'-2}-2^{\rho+\rho'-2}}{\rho+\rho'-2}+\bO(1).
\end{align*}
Note that $\rho'=2-\rho$ implies $\im(\rho')=-\im(\rho)$. We are now going to estimate the second term
$$\sum_{\rho}\frac{n_{\rho}}{\rho}\sum_{\rho'\neq 2-\rho}\frac{n_{\rho'}}{\rho'}\cdot\frac{x^{\rho+\rho'-2}-2^{\rho+\rho'-2}}{\rho+\rho'-2}.$$
Let $\eta>0$ be sufficiently small, which is independent from $x$, so that it is smaller than the smallest positive $\gamma$. We will estimate the sum separately for two different cases:
$$|\rho+\rho'-2|\geq \eta \quad \text{and} \quad |\rho+\rho'-2|<\eta.$$
First, by symmetry, it is sufficient to show that the following two sums converge for all zeros satisfying $|\rho+\rho'-2|\geq \eta$ and $\rho'\neq 2-\rho$:
\begin{align}
    \sum_{\gamma >0}\frac{n_{\rho}}{|\rho|}&\sum_{\gamma'>0}\frac{n_{\rho'}}{|\rho'|(\gamma+\gamma')}, \label{1}\\
    \sum_{\gamma >0}\frac{n_{\rho}}{|\rho|}& \sum_{o<\gamma'\leq \gamma-\eta}\frac{n_{\rho'}}{|\rho'|(\gamma-\gamma')} \label{2}.
\end{align}
Note that all sums in \eqref{1} and \eqref{2} are over zeros which satisfy $|\rho+\rho'-2|\geq \eta$. The convergence of the first sum is obvious, and thus, we consider the second sum.\\

Recall the following result by Selberg on the number of zeros of $L_{E}(s)$ in a bounded region \cite{Selberg}:
\begin{theorem}
\label{Selberg}
Let $N_{E}(T)$ be the number of zeros $\rho=\beta+i\gamma$ of $L_{E}(s)$ satisfying $0<\gamma\leq T$. Then
\begin{equation}
    N_{E}(T)=\frac{\alpha_{E}}{\pi}T(\log T +c)+S_{F}(T)+\bO(1),
\end{equation}
where $c$ is a constant, $\alpha_{E}$ is a constant which depends on $E$, and $S_{F}(T)=\bO(\log T)$.
\end{theorem}
Hence, we deduce $N_E(T+1)-N_E(T)\ll \log(T)$ from Theorem \ref{Selberg}. Combined with the sum \eqref{2}, we have
\begin{align*}
\sum_{o<\gamma'\leq \gamma-\eta}\frac{n_{\rho'}}{|\rho'|(\gamma-\gamma')}& =\left(\underset{0<\gamma'\leq \gamma^{\frac{2}{3}} }\sum + \underset{\gamma^{\frac{2}{3}}<\gamma'\leq \gamma-\gamma^{\frac{2}{3}} }\sum + \underset{\gamma-\gamma^{\frac{2}{3}}<\gamma'\leq \gamma-\eta}\sum\right)\frac{n_{\rho'}}{|\rho'|(\gamma-\gamma')}
\\
&=\bO\left(\frac{\log \gamma}{\gamma^{1/3}}\right),
\end{align*}
and we obtain
\begin{align*}
      \int^{x}_{2}\frac{\psi^2_{E}(t)}{t^3}dt&=\log x \sum_{\rho}\left|\frac{n_{\rho}}{\rho}\right|^2+\sum_{\rho}\underset{\substack{\rho' \neq 2-\rho\\ 0<|\gamma+\gamma'|<\eta}}{\sum}\frac{n_{\rho}n_{\rho'}(x^{\rho+\rho'-2}-2^{\rho+\rho'-2})}{\rho\rho'(\rho+\rho'-2)}+\bO(1)\\
      &=\log x \sum_{\rho}\left|\frac{n_{\rho}}{\rho}\right|^2+\sum_{\rho}\underset{\substack{\rho' \neq 2-\rho\\ 0<|\gamma+\gamma'|<\eta}}{\sum}\frac{n_{\rho}n_{\rho'}(x^{i(\gamma+\gamma')}-2^{i(\gamma+\gamma')})}{i\rho\rho'(\gamma+\gamma')}+\bO(1).
\end{align*}
From now on, we are going to show that, given arbitrarily small $\epsilon>0$, 
\begin{equation}
\label{ineq1}
    \left|~\sum_{\rho}\underset{\substack{\rho' \neq 2-\rho\\ 0<|\gamma+\gamma'|<\eta}}{\sum}\frac{n_{\rho}n_{\rho'}(x^{i(\gamma+\gamma')}-2^{i(\gamma+\gamma')})}{i\rho\rho'(\gamma+\gamma')}~\right|<\epsilon \log x,
\end{equation}
for $x>x_{0}$ with any choice of $x_{0}$. Considering the cases $|(\gamma+\gamma')\log x |\geq 1$ and $|(\gamma+\gamma')\log x |\leq 1$ separately, we observe that
\[\frac{x^{i(\gamma + \gamma')}-2^{i(\gamma + \gamma')}}{\gamma+\gamma'}\ll \min\left\{\frac{1}{|\gamma+\gamma'|},\log x\right\}.\]
We fix a large $T$ and let $\Delta \coloneqq \min \{|\gamma+\gamma'| : |\gamma|\leq T\}$. Then the left sum in \eqref{ineq1} is 
\begin{equation}
\label{sums-with-delta}
    \left|~\sum_{\rho}\underset{\substack{\rho' \neq 2-\rho\\ 0<|\gamma+\gamma'|<\eta}}{\sum}\frac{n_{\rho}n_{\rho'}(x^{i(\gamma+\gamma')}-2^{i(\gamma+\gamma')})}{i\rho\rho'(\gamma+\gamma')}~\right|\ll \frac{1}{\Delta}\sum_{~|\rho|,|\rho'|\leq T}\frac{1}{|\rho\rho'|}+ \sum_{\substack{|\rho|\geq T \\ |\rho'-\rho|<\eta}}\frac{\log x}{|\rho\rho'|},
\end{equation}
where the two sums on the right side of \eqref{sums-with-delta} are taken over $\rho, \rho'$ satisfying $0<|\gamma+\gamma'|<\eta$. 
Since the sum
\[ \sum_{\substack{|\rho|\geq T \\ |\rho'-\rho|<\eta}}\frac{\log x}{|\rho\rho'|}\]
converges as $T\to \infty$, we have by choosing large enough $x$
\[ \left|~\sum_{\rho}\underset{\substack{\rho' \neq 2-\rho\\ 0<|\gamma+\gamma'|<\eta}}{\sum}\frac{n_{\rho}n_{\rho'}(x^{i(\gamma+\gamma')}-2^{i(\gamma+\gamma')})}{i\rho\rho'(\gamma+\gamma')}~\right| \leq \e_{T}\log x + \bO_{T}(1).\]
Therefore, by letting $T\to \infty$ and assuming the Riemann hypothesis for $L_{E}(s)$, we obtain
\begin{equation}
    \lim_{x\to\infty}\frac{1}{\log x}\int^{x}_{2}\frac{\psi^{2}_{E}(t)}{t^3}dt=\sum_{\rho}\left|\frac{n_{\rho}}{\rho}\right|^2.
\end{equation}

\end{proof}

\begin{cor}
\label{main1-cor}
There exists $c>0$ such that for sufficiently large $x>0$, there exists $t\in[x,2x]$ which satisfies 
\begin{equation}
\label{cramer-bound}
    |\psi_{E}(t)|< ct\sqrt{\log t}.
\end{equation}
\end{cor}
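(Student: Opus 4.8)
The plan is to deduce the corollary from Theorem~\ref{main1} by a contradiction/averaging argument: if $\lvert\psi_E(t)\rvert$ were too large throughout an entire dyadic block $[x,2x]$, then the normalized second moment $\int \psi_E^2(t)t^{-3}\,dt$ over that block would be forced to grow at least like $\log x$, contradicting the fact that Theorem~\ref{main1} only permits it to grow like a \emph{constant} on such a block. I set $S\coloneqq\sum_{\rho}\lvert n_\rho/\rho\rvert^2$, which is finite by Theorem~\ref{main1}.

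First I would localize the mean-square asymptotic to the interval $[x,2x]$. Theorem~\ref{main1} gives $\int_2^y \psi_E^2(t)t^{-3}\,dt = S\log y + o(\log y)$, and differencing at $y=2x$ and $y=x$ yields
\[
\int_x^{2x}\frac{\psi_E^2(t)}{t^3}\,dt = S\log 2 + o(\log x) = o(\log x).
\]
I expect this differencing to be the step needing the most care: one must confirm that the two error terms, each known only to be $o(\log\,\cdot\,)$, combine to leave an $o(\log x)$ contribution, so that the block integral grows \emph{strictly slower} than $\log x$. This sublinear growth (in $\log x$) is precisely the feature that powers the corollary.

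Next, fix any $c>0$ and suppose for contradiction that the conclusion fails for arbitrarily large $x$; for each such $x$ we then have $\lvert\psi_E(t)\rvert \geq ct\sqrt{\log t}$ for every $t\in[x,2x]$. Since $\psi_E$ is real-valued, squaring and dividing by $t^3$ gives $\psi_E^2(t)t^{-3}\geq c^2(\log t)/t$, whence
\[
\int_x^{2x}\frac{\psi_E^2(t)}{t^3}\,dt \;\geq\; c^2\int_x^{2x}\frac{\log t}{t}\,dt \;=\; c^2\log 2\,\log x + \bO(1),
\]
using $\int_x^{2x}(\log t)t^{-1}\,dt=\tfrac12\bigl((\log 2x)^2-(\log x)^2\bigr)$. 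Comparing with the upper bound $o(\log x)$ from the previous step forces $c^2\log 2\,\log x \leq o(\log x)$, which is impossible for large $x$. Hence for all sufficiently large $x$ there is some $t\in[x,2x]$ with $\lvert\psi_E(t)\rvert < ct\sqrt{\log t}$. Notably the argument succeeds for \emph{every} $c>0$, so any fixed choice (e.g.\ $c=1$) establishes the stated existence.
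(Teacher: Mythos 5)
Your proof is correct and takes essentially the same approach as the paper: both difference the asymptotic of Theorem~\ref{main1} to get $\int_x^{2x}\psi_E^2(t)t^{-3}\,dt = o(\log x)$, and then derive a contradiction from the assumed pointwise lower bound $|\psi_E(t)|\geq ct\sqrt{\log t}$ on $[x,2x]$, which forces the same integral to be at least $c^2\log 2\,\log x$. Your remark that the contradiction works for \emph{every} fixed $c>0$ is a correct minor sharpening of the paper's phrasing, which only invokes ``sufficiently small $c$.''
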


\begin{proof}
Note that Theorem \ref{main1} implies
\begin{equation}
\label{main1-cor1}
    \int^{2x}_{x}\frac{\psi^{2}_{E}(u)}{u^3}~ du = o(\log x).
\end{equation}
Assume that for every $t\in[x,2x]$, we have $|\psi_{E}(t)|\geq ct\sqrt{\log t}$. This implies
\begin{equation}
    \int^{2x}_{x}\frac{\psi^2_{E}(u)}{u^3}du\geq\int^{2x}_{x}\frac{c^2u^2\log u}{u^3}du\geq c^2\log2\log x,
\end{equation}
which contradicts \eqref{main1-cor1}, for a sufficiently small $c$.
\end{proof}
\section{Birch and Swinnerton-Dyer conjecture and related works}
The Birch and Swinnerton-Dyer conjecture describes the rank $r_{M}$ of the Mordell-Weil group of $E$ and relates it to the order of vanishing of its $L$-function. The conjecture has been improved over time with numerical evidence, and there are several ways to describe their conjecture. In this paper, we are interested in the following version of the conjecture from \cite{BSD}, which we previously introduced as ``OBSD":
\begin{conj}[Birch and Swinnerton-Dyer]
\label{bsd}
For some constant $C_{E}$, we have
\begin{equation}
    \underset{\substack{p<x \\ p\nmid \Delta_{E}}}\prod \frac{N_{p}}{p} \sim C_{E}(\log x)^{r},
\end{equation}
where $r$ is the order of the zero of the $L$-function $L_{E}(s)$ of $E$ at $s=1$.
\end{conj}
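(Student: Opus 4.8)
The plan is to convert the multiplicative statement of Conjecture \ref{bsd} into an additive one and then feed it into the explicit-formula machinery of Section 2. Since the Euler factor of $L_E(s)$ at $s=1$ equals $(1-a_p/p+1/p)^{-1}=p/N_p$, taking logarithms gives
\[
\log\prod_{\substack{p<x\\p\nmid\Delta_E}}\frac{N_p}{p}=-\sum_{\substack{p<x\\p\nmid\Delta_E}}\sum_{k\geq1}\frac{\alpha_p^k+\beta_p^k}{kp^k},
\]
so Conjecture \ref{bsd} is equivalent to the asymptotic $\sum_{p<x}\sum_{k\geq1}(\alpha_p^k+\beta_p^k)/(kp^k)=-r\log\log x+\bO(1)$. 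First I would record that, by the equivalences of Kuo--Murty \cite{Kuo-Murty} and K.\ Conrad \cite{Conrad} quoted in the introduction, this is in turn equivalent to the Chebyshev-weighted bound
\[
\psi_E(x)=\sum_{p^k\leq x}(\alpha_p^k+\beta_p^k)\log p=o(x\log x),\qquad x\to\infty,
\]
the bad primes contributing only $\bO(\log x)$. The whole problem is thereby reduced to a single pointwise estimate for $\psi_E$.

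Next I would invoke the truncated Perron/Cauchy analysis already carried out in Section 2. The zero of order $r$ of $L_E$ at $s=1$ and the nontrivial zeros $\rho=1+i\gamma$ together yield, under the Riemann hypothesis for $L_E(s)$ and in the notation of the proof of Theorem \ref{main1},
\[
\psi_E(x)=x\sum_{\rho}\frac{n_\rho\,x^{i\gamma}}{\rho}+(\text{lower order}),
\]
in which the central term $\gamma=0$ is bounded (it accounts for the main term of size $\asymp x$) and the remaining terms oscillate. Consequently the target $\psi_E(x)=o(x\log x)$ is equivalent to showing that the oscillating sum $\sum_\rho n_\rho x^{i\gamma}/\rho$ over the nontrivial zeros is $o(\log x)$ uniformly in $x$.

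To control this sum I would lean on the mean-square results established above. Theorem \ref{main1} gives $\frac{1}{\log x}\int_2^x \psi_E^2(t)/t^3\,dt\to\sum_\rho|n_\rho/\rho|^2$, so that $|\psi_E(t)|\asymp t$ \emph{on average}, comfortably within $o(t\log t)$; Corollary \ref{main1-cor} then extracts good points $t\in[x,2x]$ with $|\psi_E(t)|\ll t\sqrt{\log t}$. The remaining and decisive step is to promote this mean-square and sparse-point information to the pointwise bound $\psi_E(x)=o(x\log x)$ valid for \emph{every} large $x$.

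This last step is the main obstacle, and it is precisely where the original Birch and Swinnerton-Dyer conjecture outstrips the Riemann hypothesis. Under RH alone the best available pointwise estimate is $\psi_E(x)=\bO(x(\log x)^2)$, which falls short of the required $o(x\log x)$ by a factor of $\log x$; the integrated control of Theorem \ref{main1} excludes such large values on average but cannot by itself rule out rare spikes of size $\asymp x\log x$ or larger. Closing this gap amounts to exhibiting the additional cancellation among the $a_p$ that the local--global heuristic of Birch and Swinnerton-Dyer predicts, and with present technology this appears genuinely out of reach; accordingly I would expect any unconditional proof to require fundamentally new input beyond the zero-density and mean-value methods used here.
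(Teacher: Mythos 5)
You have not proved the statement, and neither does the paper: Conjecture~\ref{bsd} is the \emph{original} Birch and Swinnerton-Dyer conjecture, which the paper states as an open conjecture precisely because no proof is known, so there is no proof of it in the paper to compare against. Your write-up, which ends by conceding that the decisive step is out of reach, is therefore correctly not a proof, and its honest conclusion is the right one. Your reductions do faithfully reproduce the paper's own framing: the logarithmic identity $\log\prod_{p<x,\,p\nmid\Delta_E}(N_p/p)=-\sum_{p<x}\sum_{k\geq 1}(\alpha_p^k+\beta_p^k)/(kp^k)$ and the equivalence of Conjecture~\ref{bsd} with $\sum_{p^k\leq x,\,p\nmid\Delta_E}(\alpha_p^k+\beta_p^k)\log p=o(x\log x)$ is exactly Theorem~\ref{kuo-murty-estimation}, quoted from Kuo--Murty and Conrad; the paper takes this equivalence as given rather than reproving it. One small inaccuracy: the asymptotic with a specific constant $C_E$ is equivalent to $\sum_{p^k\leq x}\,(\alpha_p^k+\beta_p^k)/(kp^k)=-r\log\log x+A+o(1)$ for some constant $A$, as the introduction records, and not merely to $-r\log\log x+\bO(1)$, which is strictly weaker.

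Two further cautions about the route you sketch. First, Theorem~\ref{main1} and Corollary~\ref{main1-cor} are themselves conditional on the Riemann hypothesis for $L_E(s)$, so the mean-square control you lean on cannot contribute anything unconditional; and even granting RH, Corollary~\ref{main1-cor} only produces \emph{some} $t\in[x,2x]$ with $|\psi_E(t)|\ll t\sqrt{\log t}$, which, as you correctly observe, cannot be promoted to the pointwise bound $\psi_E(x)=o(x\log x)$ needed at every large $x$ --- this is exactly the sense in which OBSD outstrips RH, per \eqref{bsd-eq2} versus \eqref{riemann-hypothesis}. Second, the closest the paper actually comes to Conjecture~\ref{bsd} is the \emph{opposite} conditional direction: the corollary following Theorem~\ref{main2} shows that if the limit \eqref{bsd-limit-cor-1} of $\frac{1}{\log x}\sum_{p<x}a_p\log p/p$ exists, then $\sum_{n\leq x}c_n\Lambda(n)=o(x\log x)$ and hence Conjecture~\ref{bsd} holds. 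If you want a provable statement adjacent to the conjecture, that implication is it, and the tools you already assembled (Perron with the kernel $x^s/(s(s-1))$ relating $\sum_{n\leq x}c_n\Lambda(n)/n$ to $\psi_E(x)/x$, plus the Rankin--Selberg evaluation of $\sum_{p\leq\sqrt{x}}a_p^2\log p/p^2$ from Kuo--Murty) are precisely what the paper uses to establish it.
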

Furthermore, Birch and Swinnerton-Dyer conjectured that the order of the zero of the $L$-function $L_{E}(s)$ of $E$ is equal to the rank $r_M$ of the Mordell-Weil group $E(\QQ)$ of $E$.\\

Kuo and Murty \cite{Kuo-Murty} and Conrad \cite{Conrad} independently showed that Conjecture \ref{bsd} is equivalent to an asymptotic condition of a sum involving the eigenvalues of the Frobenius at each prime. We cite the result from \cite{Kuo-Murty}*{Theorem 2 and 3} and \cite{Conrad}*{Theorem 1.3}:

\begin{theorem}
\label{kuo-murty-estimation}
Conjecture \ref{bsd} is true if and only if
\begin{equation}
\label{bsd-eq1}
    \sum_{\substack{p^k\leq x\\ p\nmid \Delta_{E}}}\frac{\alpha^{k}_{p}+\beta^{k}_{p}}{k}=o(x).
\end{equation}
Or equivalently, 
\begin{equation}
\label{bsd-eq2}
\sum_{\substack{p^k\leq x \\ p\nmid\Delta_{E}}}(\alpha^{k}_{p}+\beta^{k}_{p})\log p=o(x\log x).
\end{equation}
\end{theorem}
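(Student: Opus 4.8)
The plan is to prove the two equivalences in two stages: first the elementary equivalence of the sum conditions \eqref{bsd-eq1} and \eqref{bsd-eq2} with each other, and then the analytic equivalence of either of them with \eqref{OBSD}. Throughout I would discard the finitely many primes $p\mid\Delta_E$, since there $a_p\in\{0,\pm1\}$ and their total contribution to either sum is $\bO(\log x)$, hence negligible. Writing $n=p^k$ we have $\Lambda(n)/\log n=1/k$, so \eqref{bsd-eq1} is $\sum_{n\le x}c_n\Lambda(n)/\log n=o(x)$ and \eqref{bsd-eq2} is $\psi_E(x)=\sum_{n\le x}c_n\Lambda(n)=o(x\log x)$. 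These are interchangeable by Abel summation: from $\psi_E(x)=o(x\log x)$ one gets $\sum_{n\le x}c_n\Lambda(n)/\log n=\psi_E(x)/\log x+\int_2^x\psi_E(t)\,t^{-1}(\log t)^{-2}\,dt=o(x)+o\!\left(\int_2^x dt/\log t\right)=o(x)$, and conversely, if $G(x)=\sum_{n\le x}c_n\Lambda(n)/\log n=o(x)$ then $\psi_E(x)=G(x)\log x-\int_2^x G(t)\,dt/t=o(x\log x)$. This settles \eqref{bsd-eq1}$\iff$\eqref{bsd-eq2}.

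For the second stage I would take logarithms in \eqref{OBSD} and expand each factor using $N_p/p=(1-\alpha_p/p)(1-\beta_p/p)$, giving
\[\log\prod_{\substack{p<x\\ p\nmid\Delta_E}}\frac{N_p}{p}=-\sum_{\substack{p<x\\ p\nmid\Delta_E}}\sum_{k\ge1}\frac{\alpha_p^k+\beta_p^k}{kp^k}=-\sum_{n\le x}\frac{c_n\Lambda(n)}{n\log n}+\bO(1),\]
where the $\bO(1)$ collects the prime powers with $p<x<p^k$ (a windowed Mertens-type sum, handled by Rankin--Selberg for the $k=2$ terms and trivially for $k\ge3$). Thus \eqref{OBSD} is equivalent to $\sum_{n\le x}c_n\Lambda(n)/(n\log n)=-r\log\log x+A+o(1)$ for some constant $A$. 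Relating the left side to $\psi_E$ by Abel summation and inserting the explicit-formula decomposition $\psi_E(t)=-rt+E(t)$, where $-rt$ is the residue at the order-$r$ zero $s=1$, the main term $-rt$ reproduces exactly $-r\int_2^x dt/(t\log t)=-r\log\log x+\bO(1)$. Hence \eqref{OBSD} reduces to the assertion that the error integrals $\int_2^x E(t)\,t^{-2}(\log t)^{-1}\,dt$ and $\int_2^x E(t)\,t^{-2}(\log t)^{-2}\,dt$ converge as $x\to\infty$, which I would then match against $\psi_E(x)=o(x\log x)$.

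The main obstacle is precisely this convergence, and it is where the argument stops being soft: the crude bound $E(t)=o(t\log t)$ alone makes these integrals only $o(\log x)$, not convergent, so the equivalence cannot come from size estimates. The point is that $E(t)=-\sum_{\rho\ne1}n_\rho t^{\rho}/\rho+\bO(1)$ is an oscillatory sum over the nontrivial zeros $\rho=\beta+i\gamma$, and the integrals converge only through cancellation in $t^{\rho}$: for a single zero the substitution $u=\log t$ turns $\int t^{\rho-2}/\log t\,dt$ into the convergent $\int e^{(\rho-1)u}/u\,du$, and the real work is summing these over all $\rho$. Any off-line zero with $\beta>1$ contributes a term of exact order $t^{\beta}$, which is not $o(t\log t)$ and destroys convergence, so \eqref{OBSD} forces $\re(\rho)=1$ for every $\rho$ and therefore contains the Riemann hypothesis for $L_E(s)$; beyond that it requires genuine cancellation among the on-line zeros, of exactly the type measured by the Cramér-type $L^2$ identity of Theorem \ref{main1} and Corollary \ref{main1-cor}. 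The decisive step I expect to be hardest is thus controlling $\sum_{\rho}(n_\rho/\rho)\,x^{i\gamma}$ uniformly enough to equate $\psi_E(x)=o(x\log x)$ with convergence of the error integrals; this is the step that makes \eqref{OBSD} strictly stronger than the Riemann hypothesis and that the rest of the paper is built to analyze.
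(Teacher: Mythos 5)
A preliminary remark: the paper does not prove this theorem at all; it is quoted from \cite{Kuo-Murty}*{Theorems 2 and 3} and \cite{Conrad}*{Theorem 1.3}, so there is no internal proof to compare yours against, and your attempt must stand on its own. Much of your setup is sound. Your Stage 1 --- the equivalence of \eqref{bsd-eq1} and \eqref{bsd-eq2} by the two partial summations, after discarding the $\bO(\log x)$ contribution of primes dividing $\Delta_E$ --- is complete and correct. Your opening move in Stage 2 is also the right one: expanding $-\log$ of the partial Euler product and absorbing the prime powers with $p<x<p^k$ into a constant plus $o(1)$ (Rankin--Selberg for $k=2$, trivial bounds for $k\ge 3$) shows that \eqref{OBSD} is equivalent to $\sum_{n\le x}c_n\Lambda(n)/(n\log n)=-r\log\log x+A+o(1)$ for some constant $A$. (Note you genuinely need constant plus $o(1)$ here, not merely the $\bO(1)$ you wrote, since \eqref{OBSD} asserts an asymptotic with an actual constant $C_E$; Rankin--Selberg does deliver this.)

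The genuine gap is that you then prove neither direction of the biconditional. You reduce the theorem to ``convergence of the error integrals is equivalent to $\psi_E(x)=o(x\log x)$,'' correctly observe that size estimates alone cannot decide this, and then declare the remaining step to be the one you ``expect to be hardest.'' That remaining step \emph{is} the theorem; a proposal that defers it has established only standard reformulations. Concretely, what is missing: for \eqref{bsd-eq2}$\Rightarrow$\eqref{OBSD} one must (i) deduce the Riemann hypothesis for $L_E$ from $\psi_E(x)=o(x\log x)$ --- your single-zero argument (``an off-line zero contributes a term of exact order $t^{\beta}$'') is not rigorous as stated, since infinitely many off-line zeros could a priori cancel; the correct route is that $\int_1^\infty\psi_E(t)t^{-s-1}dt$ is then analytic in $\re(s)>1$, combined with the functional equation --- and (ii) insert the truncated explicit formula and sum the resulting oscillatory integrals $\int_{\log 2}^{\log x}e^{i\gamma u}u^{-1}du\ll\min\{\log\log x,\,|\gamma|^{-1}\}$ over zeros, where absolute convergence of the zero-sum comes from $N_E(T+1)-N_E(T)\ll\log T$ (Theorem \ref{Selberg}), so that $\sum_\rho n_\rho\gamma^{-2}<\infty$; neither of these appears in your proposal. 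For the converse \eqref{OBSD}$\Rightarrow$\eqref{bsd-eq2}, a direct Abel summation converting $-r\log\log x+A+o(1)$ into $\psi_E(x)=-rx+o(x\log x)$ suffices, but you never perform it. Finally, your closing claim that this missing step is ``what the rest of the paper is built to analyze'' misreads the paper's architecture: Theorem \ref{main1} and Corollary \ref{main1-cor} are used to prove Theorem \ref{main2}, while the equivalence at hand is simply imported from the literature.
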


The Riemann hypothesis for $L_{E}(s)$ is equivalent to the following asymptotic condition of the sum \eqref{bsd-eq2}: 
\begin{equation}
\label{riemann-hypothesis}
\sum_{\substack{p^k\leq x\\p\nmid \Delta_{E}}}(\alpha^{k}_{p}+\beta^{k}_{p})\log p =\mathcal{O}(x(\log x)^2),    
\end{equation}
hence, as Kuo and Murty \cite{Kuo-Murty} and Conrad \cite{Conrad} pointed out, Conjecture \ref{bsd} is much deeper than the Riemann hypothesis for $L_{E}(s)$ according to our current knowledge.\\

Using Theorem \ref{main1}, we prove the following result:
\begin{theorem}
\label{main2}
Assume the Riemann hypothesis is true for $L_{E}(s)$. Then there is a sequence $x_n\in[2^{n},2^{n+1}]$ such that
\begin{equation}
    \lim_{n\to\infty}\frac{1}{\log x_n}\sum_{p<x_n}\frac{a_p\log p}{p}=-r+\frac{1}{2},
\end{equation}
where $r$ is the order of $L_{E}(s)$ at $s=1$.
\end{theorem}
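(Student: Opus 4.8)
The plan is to transfer the statement about the prime sum to the Chebyshev-type function $\psi_E$, read off the main term $-r\log x$ from the order-$r$ zero of $L_E$ at $s=1$, tame the resulting boundary fluctuation using the good points from Corollary \ref{main1-cor}, and finally account for the prime squares, which are responsible for the shift from $-r$ to $-r+\tfrac12$. First I would use partial summation: with $\psi_E(t)=\sum_{n\le t}c_n\Lambda(n)$ and $\tilde S(x)\coloneqq\sum_{n\le x}\frac{c_n\Lambda(n)}{n}$, Abel summation gives
\[ \tilde S(x)=\frac{\psi_E(x)}{x}+\int_2^x\frac{\psi_E(t)}{t^2}\,dt+\bO(1). \]
The target sum $\sum_{p<x}\frac{a_p\log p}{p}$ is exactly the $k=1$ part of $\tilde S(x)$ (bad primes contributing only $\bO(1)$), so the prime powers $k\ge2$ will have to be restored at the end.

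Next I would evaluate the integral by integrating the truncated explicit formula \eqref{cauchy-integral}. The order-$r$ zero at $s=1$ is the pole of $-L_E'/L_E$ and yields the main term $\psi_E(t)\sim -rt$, so $\int_2^x(-rt)\,t^{-2}\,dt=-r\log x+\bO(1)$. Under the Riemann hypothesis for $L_E$ every other zero is $\rho=1+i\gamma$ with $\gamma\neq0$, and
\[ \int_2^x\frac{t^{\rho}}{t^2}\,dt=\frac{x^{i\gamma}-2^{i\gamma}}{i\gamma}\ll\frac{1}{|\gamma|}. \]
Since $|\rho|\asymp|\gamma|$ and Selberg's bound (Theorem \ref{Selberg}) gives $N_E(T+1)-N_E(T)\ll\log T$, the series $\sum_{\gamma\neq0}\frac{n_\rho}{|\rho||\gamma|}$ converges. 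Choosing the truncation height $R\asymp x$ (so the Perron error $\bO(t^2/R)$ contributes $\bO(x/R)=\bO(1)$ after division by $t^2$, and the edge integral over $\mathcal C\setminus I_1$ is negligible) then makes the oscillatory part of the integral $\bO(1)$, uniformly in $x$. Hence $\int_2^x\frac{\psi_E(t)}{t^2}\,dt=-r\log x+\bO(1)$.

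The boundary term $\psi_E(x)/x$ is the only quantity that resists control for all $x$: integrating the explicit formula as above only gives $\psi_E(x)/x=-r+\bO((\log x)^2)$, which after division by $\log x$ need not vanish. This is where Corollary \ref{main1-cor} is decisive: it produces $x_n\in[2^n,2^{n+1}]$ with $|\psi_E(x_n)|<c\,x_n\sqrt{\log x_n}$, so that $\psi_E(x_n)/(x_n\log x_n)\ll(\log x_n)^{-1/2}\to0$. Along this subsequence, $\tilde S(x_n)=-r\log x_n+o(\log x_n)$.

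Finally I would restore the prime powers. The terms with $k\ge3$ converge, since $|\alpha_p^k+\beta_p^k|\le2p^{k/2}$. The decisive contribution is $k=2$, namely $\sum_{p\le\sqrt x}\frac{\alpha_p^2+\beta_p^2}{p^2}\log p$; writing $\alpha_p^2+\beta_p^2=a_p^2-2p$ and invoking the Rankin--Selberg factorization $L(E\times E,s)=\zeta(s-1)\,L(\mathrm{Sym}^2E,s)$—in particular the pole of $\zeta(s-1)$ at $s=2$ together with the holomorphy and non-vanishing of $L(\mathrm{Sym}^2E,s)$ there—one obtains $\sum_{p\le y}(\alpha_p^2+\beta_p^2)\log p\sim-\tfrac{y^2}{2}$, whence by partial summation the $k=2$ sum equals $-\tfrac12\log x+o(\log x)$. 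Therefore $\sum_{p<x}\frac{a_p\log p}{p}=\tilde S(x)+\tfrac12\log x+o(\log x)$, and along $x_n$ this is $(-r+\tfrac12)\log x_n+o(\log x_n)$, giving the stated limit. The essential point—and the main obstacle—is the constant $\tfrac12$: it is produced not by the zeros of $L_E$ but by the pole of $\zeta(s-1)$ hidden inside the symmetric-square factorization, so the real content lies in this unconditional second-moment estimate for $a_p^2$ rather than in the explicit formula, which by itself yields only $-r$. A secondary technical issue is making the oscillatory integral genuinely $\bO(1)$ uniformly in $x$ while controlling the truncation of the explicit formula; this runs parallel to the estimates already carried out in the proof of Theorem \ref{main1}.
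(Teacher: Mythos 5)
Your proposal is correct and takes essentially the same route as the paper: both reduce the problem to the estimate $\sum_{n\le x}c_n\Lambda(n)/n=-r\log x+\psi_E(x)/x+\bO(1)$ under RH, then invoke Corollary \ref{main1-cor} to pick $x_n$ with $\psi_E(x_n)=\bO(x_n\sqrt{\log x_n})$, and finally use the Rankin--Selberg input on $\sum_p a_p^2\log p/p^2$ to convert $-r$ into $-r+\tfrac12$. The only divergence is presentational: the paper derives the key estimate from a Perron integral with the kernel $x^s/(s(s-1))$ (citing Conrad's residue computations), while you derive it by Abel summation plus termwise integration of the truncated explicit formula; these are equivalent, since integrating $t^{\rho-2}$ in $t$ produces exactly the factor $1/(\rho-1)$ that the kernel supplies, and both hinge on RH making $\sum_\rho n_\rho/|\rho(\rho-1)|\asymp\sum_\gamma n_\rho/\gamma^2$ absolutely convergent.
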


\begin{proof}
For $x>1$, $d>3/2$ and any real number $a$, Perron's formula suggests
\begin{equation}
    \frac{1}{2\pi i}\int^{d+i\infty}_{d-i\infty}-\frac{L'_{E}(s)}{L_{E}(s)}\frac{x^s}{s-a} ~ds=x^a\sideset{}{'}\sum_{n\leq x}\frac{c_n\Lambda(n)}{n^a},
\end{equation}
where the dash on the sum means that the last term in the sum is weighted by $1/2$ if $x$ is an integer. We consider the case when $a=0,1$, and $d=2$, we get
\begin{equation}
    \frac{1}{2\pi i}\int^{c+i\infty}_{c-i\infty}-\frac{L'_{E}(s)}{L_{E}(s)}\frac{x^s}{s(s-1)}~ds=x\sum_{n\leq x}\frac{c_n\Lambda(n)}{n}-\sum_{n\leq x}c_n\Lambda(n).
\end{equation}
We write the expansion of $\frac{L'_{E}(s)}{L_{E}(s)}$ at $s=0$ and $s=1$ as
\begin{equation}
\ldl= \frac{r'}{s}+d'+\cdots, \quad \ldl=\frac{r}{s-1} +d+\cdots.   
\end{equation}
By following the residue computations as in \cite{Conrad}*{(6.8)}, we have
\begin{equation}
    \underset{s=\rho}\Res\left(-\frac{L'_{E}(s)}{L_{E}(s)}\frac{x^s}{s(s-1)}\right)=\left\{ \begin{array}{ll}
r'(\log x+2)+d' & \textrm{if $\rho=0$,}\\
-r x(\log x -2)-dx & \textrm{if $\rho=1$.}
\end{array} \right.
\end{equation}
Hence, considering the nontrivial zeros and the poles from $\Gamma(s)$, we get
\begin{equation}
\label{eq3}
    -rx\log x+\bO(x)=x\sum_{n\leq x}\frac{c_n\Lambda(n)}{n}-\sum_{n\leq x}c_n\Lambda(n),
\end{equation}
as $x$ tends to infinity, and we get
\begin{equation}
\label{eq4}
    \sum_{n\leq x}\frac{c_n\Lambda(n)}{n}=-r\log x+\frac{\sum_{n\leq x}c_n\Lambda(n)}{x}+\bO(1),
\end{equation}
where $c_n$ is defined as in \eqref{c_n}. On the other hand, we can separate the left hand side sum of \eqref{eq4}
\begin{align*}
    \sum_{n\leq x}\frac{c_n\Lambda(n)}{n}&=\sum_{p\leq x}\frac{a_p\log p}{p}+\sum_{p^2\leq x}\frac{(\alpha^2_p+\beta^2_p)\log p}{p^2}+o(1)\\
    &=\sum_{p\leq x}\frac{a_p\log p}{p}+\sum_{p\leq \sqrt{x}}\frac{(a^2_p-2p)\log p}{p^2}+o(1)\\
    &=\sum_{p\leq x}\frac{a_p\log p}{p}+\sum_{p\leq \sqrt{x}}\frac{a^2_p\log p}{p^2}-\sum_{p\leq \sqrt{x}}\frac{2\log p}{p}+o(1)\\
    &=\sum_{p\leq x}\frac{a_p\log p}{p}-\frac{1}{2}\log x+o(1).
\end{align*}
Note that the third equality follows from calculations in the proof of \cite{Kuo-Murty}*{Lemma 1} using the theory of the Rankin-Selberg convolution. Combining this with \eqref{eq4}, we obtain
\begin{equation}
    \sum_{p\leq x}\frac{a_p\log p}{p}=\left(-r+\frac{1}{2}\right)\log x+\frac{\sum_{n\leq x}c_n\Lambda(n)}{x}+\bO(1),
\end{equation}
where $r$ is the order of $L_{E}(s)$ at $s=1$.\\

Now, using Corollary \ref{main1-cor}, for $n\geq 2$, we can define a sequence by selecting $x_{n}\in [2^{n-1},2^{n}]$ such that
\begin{equation}
\label{cramer-bound-1}
    |\psi_{E}(x_n)|<cx_{n}\sqrt{\log x_n}.
\end{equation}
From \eqref{eq4} and \eqref{cramer-bound-1}, we have
\begin{equation}
    \sum_{p\leq x_n}\frac{a_p\log p}{p}=\left(-r+\frac{1}{2}\right)\log x_n +\bO(\sqrt{\log x_n}),
\end{equation}
and
\begin{equation}
    \frac{1}{\log x_n}\sum_{p\leq x_n}\frac{a_p\log p}{p}\rightarrow -r+\frac{1}{2} \quad \text{as $n\to \infty$},
\end{equation}
where $r$ is the order of $L_{E}(s)$ at $s=1$.
\end{proof}

Furthermore, Theorem \ref{main2} implies the following:

\begin{cor}
If the limit
\begin{equation}
\label{bsd-limit-cor}
\lim_{x\to\infty}\frac{1}{\log x}\sum_{p<x}\frac{a_p\log p}{p}
\end{equation}
exists, then the Riemann hypothesis for $L_{E}(s)$ is true, and the limit is $-r+1/2$.
\end{cor}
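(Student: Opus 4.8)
The plan is to reduce the corollary to Theorem~\ref{main2} in two stages: first extract the Riemann hypothesis for $L_E(s)$ from the mere existence of the limit, and then read off the value. Throughout I would write $S(x)=\sum_{p<x}\frac{a_p\log p}{p}$ and $g(x)=\sum_{n\le x}\frac{c_n\Lambda(n)}{n}$, and assume $\lim_{x\to\infty}S(x)/\log x=L$ exists.

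The first step is to invoke the identity established \emph{unconditionally} inside the proof of Theorem~\ref{main2}, namely
\[
g(x)=S(x)-\tfrac12\log x+o(\log x),
\]
whose derivation uses only the Rankin--Selberg estimate $\sum_{p\le y}\frac{a_p^2\log p}{p^2}\sim\log y$ together with Mertens' theorem, and in particular does not rely on the Riemann hypothesis. Dividing by $\log x$ and using the hypothesis $S(x)/\log x\to L$ gives $g(x)/\log x\to L-\tfrac12=:M$, so that $g(t)=M\log t+o(\log t)$.

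The crux, and the step I expect to be the main obstacle, is to convert this into a growth bound on $\psi_E$ \emph{without presupposing RH}: the clean relation between $S(x)$ and $\psi_E(x)/x$ used in Theorem~\ref{main2} was obtained from the contour integral by bounding the nontrivial-zero contribution via RH, so it cannot be used here without circularity. Instead I would pass from $g$ to $\psi_E$ by Abel summation. Writing $\Psi(x)=\int_2^x\frac{\psi_E(t)}{t^2}\,dt$, one has $g(x)=\frac{\psi_E(x)}{x}+\Psi(x)$ and hence $\frac{d}{dx}\bigl(x\Psi(x)\bigr)=\Psi(x)+\frac{\psi_E(x)}{x}=g(x)$. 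Integrating $g(t)=M\log t+o(\log t)$ yields $x\Psi(x)=Mx\log x+o(x\log x)$, so $\Psi(x)=M\log x+o(\log x)$, and therefore
\[
\frac{\psi_E(x)}{x}=g(x)-\Psi(x)=o(\log x),\qquad\text{that is,}\qquad \psi_E(x)=o(x\log x).
\]
Since $\psi_E(x)$ differs from $\sum_{p^k\le x,\,p\nmid\Delta_E}(\alpha_p^k+\beta_p^k)\log p$ only by the $\bO(\log x)$ contribution of the finitely many bad primes, this gives a fortiori the bound $\bO(x(\log x)^2)$ of \eqref{riemann-hypothesis}, which by Theorem~\ref{kuo-murty-estimation} and the surrounding discussion is equivalent to the Riemann hypothesis for $L_E(s)$. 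This proves the first assertion.

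With the Riemann hypothesis now available, Theorem~\ref{main2} applies and produces a sequence $x_n\in[2^n,2^{n+1}]$ along which $S(x_n)/\log x_n\to-r+\tfrac12$; since the full limit $L$ is assumed to exist, every subsequence has the same limit, forcing $L=-r+\tfrac12$. (Equivalently, once RH holds the identity $S(x)=(-r+\tfrac12)\log x+\frac{\psi_E(x)}{x}+\bO(1)$ from the proof of Theorem~\ref{main2} combines with $\psi_E(x)/x=o(\log x)$ to give $S(x)/\log x\to-r+\tfrac12$ directly.) The essential new ingredient is the Abel-summation relation $\frac{d}{dx}\bigl(x\Psi(x)\bigr)=g(x)$, which is what allows the passage from the harmonically weighted sum $g$ to the unweighted bound $\psi_E(x)=o(x\log x)$ without assuming RH in advance.
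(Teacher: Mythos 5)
Your proof is correct, and its endgame coincides with the paper's: both deduce the Riemann hypothesis from the growth hypothesis and then pin down the value via Theorem \ref{main2} together with uniqueness of limits (the paper even phrases this exactly as you do, via the subsequence $x_n$). Where you genuinely diverge is in the middle step, which the paper compresses into a single unexplained assertion that the hypothesis yields \eqref{riemann-hypothesis} and hence RH. The paper's implicit route needs only a crude estimate: partial summation applied directly to $S(x)=\mathcal{O}(\log x)$ gives $\sum_{p\le x}a_p\log p = xS(x)-\int_2^x S(t)\,dt = \mathcal{O}(x\log x)$, and the $k\ge 2$ prime-power terms are $\mathcal{O}(x)$ by Hasse, which already sits inside the bound $\mathcal{O}(x(\log x)^2)$ of \eqref{riemann-hypothesis}. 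Your route is heavier: you first pass to $g(x)=\sum_{n\le x}c_n\Lambda(n)/n$ via the Rankin--Selberg identity (correctly identified as unconditional, even though the paper derives it inside an RH-assuming proof), and then use the integration identity $x\Psi(x)=\int_2^x g(t)\,dt$ to isolate $\psi_E(x)/x = g(x)-\Psi(x)=o(\log x)$; both steps check out. The extra work buys something real: $\psi_E(x)=o(x\log x)$ is precisely the criterion \eqref{bsd-eq2} of Theorem \ref{kuo-murty-estimation}, so your argument simultaneously proves the paper's \emph{next} corollary (existence of the limit implies Conjecture \ref{bsd}), and it does so without first routing through RH, whereas the paper's proof of that later corollary leans on \eqref{eq3}, which is only available once RH is in hand. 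Your closing parenthetical, combining $S(x)=(-r+\tfrac12)\log x+\psi_E(x)/x+\mathcal{O}(1)$ with $\psi_E(x)/x=o(\log x)$, is also valid and replaces the paper's subsequence argument with a direct limit computation.
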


\begin{proof}
From the assumption, we can write
\begin{equation}
    \sum_{p<x}\frac{a_p\log p}{p}=K\log x +o(\log x),
\end{equation}
for some constant $K$. Then, \eqref{riemann-hypothesis} implies that the Riemann hypothesis for $L_{E}(s)$ is true, and thus Theorem \ref{main2} shows $K=-r+1/2$, on a subsequence $x_n\to\infty$. So, if the limit exists, it must be $-r+1/2$. 
\end{proof}

\begin{cor}
If the limit
\begin{equation}
\label{bsd-limit-cor-1}
\lim_{x\to\infty}\frac{1}{\log x}\sum_{p<x}\frac{a_p\log p}{p}
\end{equation}
exists, then Conjecture \ref{bsd} is true.
\end{cor}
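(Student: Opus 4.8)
The plan is to deduce Conjecture \ref{bsd} from the Kuo--Murty criterion of Theorem \ref{kuo-murty-estimation}, which asserts that the conjecture holds if and only if the estimate \eqref{bsd-eq2} is valid. Since $\psi_{E}(x)=\sum_{n\le x}c_{n}\Lambda(n)$ differs from the good-reduction sum $\sum_{p^{k}\le x,\,p\nmid\Delta_{E}}(\alpha_{p}^{k}+\beta_{p}^{k})\log p$ only by the contribution of the finitely many bad primes $p\mid N_{E}$, and each such prime contributes $\bO(\log x)$ because $|a_{p}|\le 1$ there, the criterion \eqref{bsd-eq2} is equivalent to the single estimate $\psi_{E}(x)=o(x\log x)$. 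Thus the entire task reduces to showing that the existence of the limit \eqref{bsd-limit-cor-1} forces $\psi_{E}(x)=o(x\log x)$.

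First I would invoke the preceding corollary: as soon as the limit \eqref{bsd-limit-cor-1} exists, the Riemann hypothesis for $L_{E}(s)$ holds and the limit equals $-r+\tfrac12$. This unlocks the machinery of Theorem \ref{main2}, whose proof is carried out under exactly this hypothesis, and it also gives for \emph{every} $x$ the asymptotic
\[
    \sum_{p\le x}\frac{a_{p}\log p}{p}=\Bigl(-r+\tfrac12\Bigr)\log x+o(\log x).
\]
The point to stress is that the existence of the limit yields this for all $x$, not merely along the Cram\'er subsequence produced by Corollary \ref{main1-cor}; this is the only place the hypothesis of the corollary is used.

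Next I would feed this into the exact identities already recorded inside the proof of Theorem \ref{main2}. Rearranging \eqref{eq4} gives $\psi_{E}(x)/x=\sum_{n\le x}c_{n}\Lambda(n)/n+r\log x+\bO(1)$, and the Rankin--Selberg computation performed there gives $\sum_{n\le x}c_{n}\Lambda(n)/n=\sum_{p\le x}(a_{p}\log p)/p-\tfrac12\log x+o(1)$. Combining the two yields
\[
    \frac{\psi_{E}(x)}{x}=\sum_{p\le x}\frac{a_{p}\log p}{p}+\Bigl(r-\tfrac12\Bigr)\log x+\bO(1).
\]
Substituting the asymptotic from the previous step, the coefficients $-r+\tfrac12$, $-\tfrac12$, and $r$ of $\log x$ cancel identically, leaving $\psi_{E}(x)/x=o(\log x)$, i.e. $\psi_{E}(x)=o(x\log x)$. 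By the reduction of the first paragraph this is precisely \eqref{bsd-eq2}, so Theorem \ref{kuo-murty-estimation} delivers Conjecture \ref{bsd}.

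The computation is essentially bookkeeping of this cancellation, so the analytic work is entirely hidden in the results already proved. The steps I would be most careful about are therefore structural rather than estimative: verifying that the $o(\log x)$ furnished by the hypothesis is genuinely uniform in $x$ (guaranteed by the existence, as opposed to mere $\limsup$/$\liminf$ control, of the limit), and confirming that replacing $\psi_{E}(x)$ by the good-reduction sum of \eqref{bsd-eq2} costs only $\bO(\log x)$, which follows from the trivial bound on the bad-prime terms. Both are routine, so I expect no serious obstacle beyond making these reductions explicit.
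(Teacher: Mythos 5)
Your proposal is correct and follows essentially the same route as the paper: both deduce Conjecture \ref{bsd} from the Kuo--Murty criterion \eqref{bsd-eq2} by combining the identity \eqref{eq3}/\eqref{eq4} (available since the previous corollary supplies the Riemann hypothesis for $L_E(s)$), the Rankin--Selberg evaluation $\sum_{n\le x}c_n\Lambda(n)/n=\sum_{p\le x}a_p\log p/p-\tfrac12\log x+o(1)$, and the hypothesis that the limit exists and equals $-r+\tfrac12$, so that the $\log x$ coefficients cancel and $\psi_E(x)=o(x\log x)$. If anything, your write-up is more careful than the paper's, since you make explicit the $\bO(\log x)$ cost of the bad primes and the fact that RH is needed before \eqref{eq4} may be invoked, points the paper passes over silently.
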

\begin{proof}
We observe that
\begin{align}
\sum_{\substack{p^k\leq x \\ p\nmid\Delta_{E}}}(\alpha^{k}_{p}+\beta^{k}_{p})\log p&=\sum_{n\leq x}c_n\Lambda(n)\\
&=x\sum_{n\leq x}\frac{c_n\Lambda(n)}{n}+c'x\log x+\bO(x)\\
    &=x\sum_{p\leq x}\frac{a_p\log p}{p}-\frac{1}{2}x\log x+o(x)+c'x\log x+\bO(x)\\
    &=\left(-c'+\frac{1}{2}\right)x\log x + o(x\log x)-\frac{1}{2}x\log x +o(x)
    +c'x\log x+\bO(x)\\
    &=o(x\log x).
\end{align}
This is equivalent to Conjecture \ref{bsd}, which is remarked as in \eqref{bsd-eq2}.
\end{proof}

\section{Concluding remarks}

We make here a few remarks that relate our results to Nagao's conjecture. Recall that this conjecture focuses on the elliptic surface
\[\mathcal{E}: y^2=x^3+A(T)x+B(T)\]
with $A(T), B(T)\in\ZZ[T]$ and $\Delta(T)\coloneqq 4A(T)^3+27B(T)^2\neq 0$. For each $t\in\ZZ$ such that $\Delta(t)\neq 0$, we have an elliptic curve $\E_t$ defined over $\QQ$, and Nagao \cite{Nagao} defined the fibral average of the trace of Frobenius for each prime $p$ as follows:
\[A_p(\E)\coloneqq \frac{1}{p}\sum^{p}_{t=1}a_p(\E_t),\]
where $a_p(\E_t)$ is the trace of the Frobenius automorphism at $p$ (that we have been studying in the previous sections) of $\E_t$. In \cite{Nagao}, Nagao conjectured that 
\begin{equation}
    -\lim_{X\to\infty}\frac{1}{X}\sum_{p\leq X} A_{p}(\E)\log p=\rank~\E(\QQ(T)).
\end{equation}
Now the sum can be re-written as
\begin{equation}
\label{error-occurs}
    \sum_{p\leq X}\frac{1}{p}\sum_{t\leq p}a_{p}(\E_t)\log p=\sum_{t\leq X}\left(~\sum_{t\leq p\leq X}\frac{a_p(\E_t)\log p}{p}~\right),
\end{equation}
and from our analysis, for a fixed $t$, the inner sum is (ignoring error terms)
\[\left(-r_t+\frac{1}{2}\right)\log\frac{X}{t},\]
where $r_t=\rank ~ \E_t(\QQ)$, so one may expect
\[-\frac{1}{X}\sum_{p\leq X}A_p(\E)\log p \sim \frac{1}{X}\sum_{t\leq X}\left(r_t-\frac{1}{2}\right)\log\frac{X}{t}.\]
This suggests the following (modified) form of Nagao's conjecture:
\begin{equation}
\label{modified-nagao}
\lim_{X\to\infty}\frac{1}{X}\sum_{t\leq X}\left(r_t-\frac{1}{2}\right)\log\frac{X}{t}=  \textrm{rank}~\E(\QQ(T)).
\end{equation}

Now, note that
\[\sum_{t\leq X}\log \frac{X}{t}=X+\bO(\log X)\]
by a simple application of Stirling's formula. Thus, it would seem that
\[\lim_{X\to\infty}\frac{1}{X}\sum_{t\leq X}r_t\log\frac{X}{t}= \rank ~ \E(\QQ(T))+\frac{1}{2}.\]
By letting $R(u)=\sum_{t\leq u}r_t$, using Abel summation formula, it is not difficult to see that
\[\sum_{t\leq X}r_t\log\frac{X}{t}=\int^{X}_1\left(~\sum_{t\leq u}r_t\right)\frac{du}{u}.\]
Thus, perhaps we have
\[\int_{1}^{X}\frac{R(u)}{u}du\sim\left(\rank ~\E(\QQ(T))+\frac{1}{2}\right)X,\]
as $X\to\infty$. We now invoke the following elementary lemma: if $f(x)$ is a positive non-decreasing function such that, as $x\to\infty$
\[\int^{x}_{1}\frac{f(u)}{u}du \sim x,\]
then $f(x)\sim x$ as $x\to\infty$ \cite{Titchmarsh}*{3.7, Page 54}. Thus, our question becomes: is it true that
\begin{equation}
\label{average-rank}
\sum_{t\leq X}r_t \sim \left(\rank~\E(\QQ(T))+\frac{1}{2}\right)X \end{equation}
as $X\to\infty$, which can be viewed as a variant of Nagao's conjecture.

\begin{remark}
The upper bound of the sum \eqref{average-rank} has been studied by several authors assuming various standard conjectures. For instance, assuming OBSD(Conjecture \ref{bsd}), the Riemann hypothesis for $L$-series attached to elliptic curves, and Tate's conjecture for elliptic surfaces, Michel \cite{Michel} proved the upper bound
\begin{equation}
    \label{Michel-bound}
    \frac{1}{2X}\sum_{|t|\leq X}r_t\leq \left(\deg \Delta(T)+\deg N(T)-\frac{3}{2}\right)(1+o(1))
\end{equation}
as $X\to\infty$, where $N(T)$ denotes the conductor of $\E$. Moreover, assuming the same standard conjectures as Michel's result \cite{Michel}, Silverman \cite{Sil2} obtained the upper bound
\begin{equation}
    \label{Silverman-bound}
    \frac{1}{2X}\sum_{|t|\leq X}r_t\leq \left(\deg N(T)+\rank~\E(\QQ(T))+\frac{1}{2}\right)(1+o(1))
\end{equation}
as $X\to\infty$. Besides, Fermigier \cite{Fermigier} studied (93 different) one-parameter families of elliptic curves of generic rank $r=\rank~\E(\QQ(T))$ with $0\leq r \leq 4$. More precisely, let $t$ be an integer, then $32\%$ of the specialized curves $\E_t$ (defined over $\QQ$) had rank $r$, $48\%$ had rank $r+1$, $18\%$ had rank $r+2$, and only $2\%$ of the specialized curves had rank $r+3$. For every family of elliptic curves and bound which are considered in \cite{Fermigier}, Fermigier found the quantity
\begin{equation}
    \label{Fermigier-object}
    \frac{1}{2X}\sum_{|t|\leq X}\rank~
    \E_t(\QQ)-\rank~\E(\QQ(T))-\frac{1}{2}
\end{equation}
ranges from $0.08$ to $0.54$ and averages around $0.35$.
\end{remark}

\begin{remark}
\label{Silvermansremark}
The authors would like to thank Joseph Silverman for informing us of the following one parameter family of elliptic curves, which was first studied by Washington \cite{Washington}:
\begin{equation}
\label{Washingtonfamily}
    \E:y^2=x^3+Tx^2-(T+3)x+1,
\end{equation}
then $j(T)=256(T^2+3T+9)$, and $\E$, viewed as an elliptic curve defined over $\QQ(T)$ has $\rank~\E(\QQ(T))=1$. Interestingly, Rizzo \cite{Rizzo}*{Theorem 1} proved that the family has large bias in its fibral root numbers. More precisely, the root numbers of the family, which is previously defined in \eqref{functionalequation}, of each fiber is
\[w_{\E_t}=-1, \quad\text{for every $t\in\mathbb{Z}$.}\]
Hence, via the Parity conjecture (or the Birch and Swinnerton-Dyer conjecture), we can expect the rank of all fibers will be odd, and in particular, positive with infinitely many rational points. On the other hand, via Silverman's specialization theorem, it is known that $\rank~ \E(\QQ(T))\leq \rank ~\E_{t}(\QQ)$ for all but finitely many $t$. Furthermore, one conjectures that $\rank~\E_t(\QQ)$ is equal to either $\rank~ \E(\QQ(T))$ or $\rank~ \E(\QQ(T))+1$ up to a zero density subset of $\QQ$, depending on the parity given by the root numbers. This tells us that, outside of a zero density subset of $\QQ$, the fibers of $\E$ have odd rank, and most likely have rank $1$. Therefore, it is likely that
\begin{equation}
    \lim_{X\to\infty}\frac{1}{2X}\sum_{|t|\leq X}\rank~ \E_{t}(\QQ)=\rank~\E(\QQ(T)),
\end{equation}
where the extra $1/2$ disappears since the fibral parity of the family $\E$ has large bias.
\end{remark}

Based on the heuristics suggested as in \eqref{average-rank}, Remark \ref{Silvermansremark}, and in \cites{Fermigier, Sil2}, the above remark, we propose the following conjecture:
\begin{conj}
\label{conjecture1}
We define
\begin{equation}
    \mathcal{T}\coloneqq \left\{t\in \ZZ: w_{\E_t}=(-1)^{\rank~ \E(\QQ(T))+1} \right\}.
\end{equation}
Then we have
\begin{equation}
    \lim_{X\to\infty}\frac{1}{2X}\sum_{|t|\leq X}r_{t}=\rank~ \E(\QQ(T))+\delta(\mathcal{T})
\end{equation}
where $\delta(\mathcal{T})$ is the natural density of $\mathcal{T}$ as a subset of $\ZZ$.
\end{conj}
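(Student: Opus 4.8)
The plan is to deduce the conjecture from three standard inputs — Silverman's specialization theorem, the parity conjecture for the fibers $\E_t$, and a density-one ``minimalist'' hypothesis asserting that for almost all $t$ the rank $r_t$ takes the smallest value compatible with the specialization lower bound and the prescribed parity. Throughout, write $g\coloneqq\rank~\E(\QQ(T))$ for the generic rank.

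First I would recall that by Silverman's specialization theorem $r_t\geq g$ for all but finitely many $t\in\ZZ$. Combined with the parity conjecture $(-1)^{r_t}=w_{\E_t}$, this forces $r_t\equiv g \pmod 2$ precisely when $w_{\E_t}=(-1)^g$, and $r_t\equiv g+1\pmod 2$ precisely when $w_{\E_t}=(-1)^{g+1}$, i.e. when $t\in\mathcal{T}$. Hence the minimal rank consistent with both constraints is $g$ off $\mathcal{T}$ and $g+1$ on $\mathcal{T}$. Assuming the minimalist hypothesis, let $S$ be the (conjecturally density-zero) set of $t$ at which $r_t$ strictly exceeds this minimal value $r_t^{\min}\in\{g,g+1\}$, and split
\[ \sum_{|t|\leq X} r_t = \sum_{\substack{|t|\leq X \\ t\notin\mathcal{T}}} g + \sum_{\substack{|t|\leq X \\ t\in\mathcal{T}}} (g+1) + \sum_{\substack{|t|\leq X \\ t\in S}} \bigl(r_t - r_t^{\min}\bigr). \]
The first two sums combine to $g\cdot(2X)+\#(\mathcal{T}\cap[-X,X]) = (g+\delta(\mathcal{T}))\,2X+o(X)$, using that $\mathcal{T}$ has a natural density. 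Dividing by $2X$ and letting $X\to\infty$ then yields $g+\delta(\mathcal{T})$, as claimed, provided the error sum over $S$ is $o(X)$.

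Controlling that last tail is the first technical point. Since ranks are not uniformly bounded, $\delta(S)=0$ alone is insufficient; I would instead invoke a conductor bound $r_t=\bO(\log|t|)$ (available under GRH) together with a quantitative density estimate $\#(S\cap[-X,X])=o(X/\log X)$, or else an averaged upper bound of the Michel \eqref{Michel-bound} / Silverman \eqref{Silverman-bound} type, to guarantee that the exceptional fibers contribute only $o(X)$. One also needs $\delta(\mathcal{T})$ to exist, which amounts to the equidistribution of the fibral root numbers $w_{\E_t}$; under standard conjectures this follows from the work of Helfgott, but families with systematic root-number bias (as in Remark \ref{Silvermansremark}) show that $\delta(\mathcal{T})$ can take any value in $[0,1]$, so no family-independent shortcut is available.

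The main obstacle, however, is the minimalist hypothesis itself: the assertion that a density-one set of fibers attains exactly minimal rank is a form of the minimalist (Katz--Sarnak) philosophy and is not known for any single non-constant family, while the parity conjecture underlying the parity bookkeeping is open in general. Thus the realistic status of this statement is conditional: I expect a clean proof only modulo the parity conjecture and the density-one minimality of ranks, with the remaining analytic steps — the counting split above, the tail estimate, and the existence of $\delta(\mathcal{T})$ — being the genuinely provable (though still conjecture-dependent) ingredients.
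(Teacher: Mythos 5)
The statement you were asked to prove is Conjecture \ref{conjecture1} of the paper: it is an \emph{open conjecture}, and the paper offers no proof of it --- only heuristic motivation. Your assessment that the statement cannot currently be proven, and can at best be derived conditionally, is therefore exactly right. Moreover, your conditional skeleton coincides with the paper's own motivation: in Remark \ref{Silvermansremark} the authors invoke precisely the same three ingredients --- Silverman's specialization theorem giving $r_t \geq \rank~\E(\QQ(T))$ for all but finitely many $t$, the parity conjecture forcing $r_t$ to have the parity dictated by the fibral root number $w_{\E_t}$, and the minimalist expectation that outside a density-zero set of $t$ the rank takes the least value compatible with these two constraints --- and the conjecture is then obtained by exactly your bookkeeping, splitting the sum over $t\in\mathcal{T}$ (where the minimal admissible rank is $\rank~\E(\QQ(T))+1$) and $t\notin\mathcal{T}$ (where it is $\rank~\E(\QQ(T))$).

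Two points in your write-up are actually \emph{more} careful than the paper. First, you note that since $r_t$ is unbounded, a density-zero exceptional set does not automatically contribute $o(X)$ to $\sum_{|t|\leq X} r_t$, so some quantitative input --- a conductor bound on $r_t$, or the conditional averaged bounds \eqref{Michel-bound}, \eqref{Silverman-bound} --- is required to control the tail; the paper glosses over this, its route to the conjecture being the heuristic \eqref{modified-nagao} together with Fermigier's experimental data \eqref{Fermigier-object}. Second, you flag that the existence of $\delta(\mathcal{T})$ is itself a hypothesis, since fibral root numbers can be badly biased (as the Washington family \eqref{Washingtonfamily} shows), which is consistent with why the paper builds $\delta(\mathcal{T})$ into the statement rather than fixing it at $1/2$. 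In short, there is no gap to report relative to the paper, because the paper proves nothing here; your proposal is a faithful, and somewhat more rigorous, formalization of the heuristic on which the conjecture rests.
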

Note that the conjecture reflects the heuristic \eqref{modified-nagao}, Remark \ref{Silvermansremark}, as well as the experimental results of Fermigier: more precisely, the results in \cite{Fermigier}*{Tableau 2} shows that the quantity \eqref{Fermigier-object} does not tend to depend significantly as the other invariants of the elliptic curves change, such as $\rank ~\E(\QQ(T)),~\deg N(T),$ and $\deg \Delta (T)$, which appear in the known upper bounds, as in \eqref{Michel-bound} and \eqref{Silverman-bound}. The work of Fermigier also suggests that the average of the error term in \eqref{error-occurs} is bounded.\\

A curious consequence of this conjecture is that the specializations $\E_t$ with $r_t$ large are very rare. It may be possible to prove such consequences of the conjecture by other methods.\\

Also, note that the bound \eqref{Michel-bound} and \eqref{Silverman-bound} have been also considered for a family of abelian varieties over $\QQ$ by Wazir \cite{Wazir}. More concretely, let $\pi:\mathcal{A}\rightarrow \mathbb{P}^1$ be a propler flat morphism of smooth projective varieties defined over $\QQ$, with an abelian variety $A$ over $\QQ(T)$ as its generic fiber. Then, by assuming standard conjectures as in \cite{Sil2}, Wazir obtains the following bound
\begin{equation}
    \frac{1}{2X}\sum_{|t|\leq X}\rank \mathcal{A}_{t}(\QQ)\leq \left(\frac{\mathcal{L}_X}{2X\log X}+\rank \mathcal{A}(\QQ(T))+\frac{g}{2}\right)(1+o(1))
\end{equation}
as $X\to\infty$, where $\mathcal{A}_{t}$ is the fiber at $t\in\mathbb{P}^1$, which is an abelian variety defined over $\QQ$, and
\begin{equation}
    \mathcal{L}_{X}=\sum_{|t|\leq X}\log N_{\mathcal{A}_{t}},
\end{equation}
where $N_{\mathcal{A}_{t}}$ is the conductor of $\mathcal{A}_t$. For more detailed definition of $\mathcal{L}_{X}$ and the conductor, please refer to \cite{Wazir}*{1.1}. Hence, one can expect similar conjectural estimation as Conjecture \ref{conjecture1} involving $g/2$ in place of $1/2$. We leave the details for future studies.  
\section*{acknowledgements}
The authors would like to thank Marc Hindry and Joseph Silverman for their helpful suggestions. The authors thank Noam Elkies for providing several of the elliptic curves used in the appendix. The authors also wish to thank the anonymous referee(s) for helpful suggestions on the earlier version of this paper.   


\newpage

\section*{Appendix}
\begin{center}
{by Andrew V. Sutherland}\footnote{Department of Mathematics, Massachusetts Institute of Technology, 77 Massachusetts Ave., Cambridge, MA 02139, USA; email: \texttt{drew@math.mit.edu}, URL: \url{https://math.mit.edu/~drew}.  Supported by Simons Foundation grant 550033.}
\end{center}

Let $a_p(E)$ denote the Frobenius trace of an elliptic curve $E/\Q$ at a prime $p$.  Figures~\ref{fig:one}, \ref{fig:two}, \ref{fig:three} plot the sums
\[
S(x):=\frac{1}{\log x}\sum_{p\le x,\ p\nmid \Delta_E} \!\!\!\!\frac{a_p(E)\log p}{p}
\]
for elliptic curves $E$ of discriminant $\Delta_E$ and various ranks; See Table~\ref{tab:sources} for a list of the curves and their sources.  These sums are conjectured to converge to $r-1/2$ as $x\to \infty$, where $r$ is the analytic rank of $L_E(s)$.  The ranks $r_E$ listed in Table~\ref{tab:sources} are lower bounds on the Mordell-Weil rank that are also upper bounds on the analytic rank under the Generalized Riemann Hypothesis (GRH), and equal to both the Mordell-Weil rank and the analytic rank under the Birch and Swinnerton--Dyer conjecture (BSD).  Ranks listed with no asterisk are equal to the Mordell-Weil rank; for those marked with a single (resp.\ double) asterisk this equality is conditional on GRH (resp.\ GRH and BSD).  Lower bounds on the Mordell-Weil rank were confirmed by verifying the existence of $r_E$ independent points using the N\'eron-Tate height pairing, while GRH-based upper bounds on the analytic rank were confirmed using Bober's method \cite{Bober13}.  GRH-based upper bounds on the Mordell-Weil rank were confirmed using \texttt{magma} \cite{magma} for $r_E \le 19$; for $r_E\ge 20$ we rely on the results of \cite{KSW}.  Exact values of Mordell-Weil ranks were confirmed using Cremona's \texttt{mwrank} \cite{mwrank} package for $r_E\le 11$; for ranks $r_E\ge 12$ with no asterisk we rely on computations reported in the listed sources. The curves of rank $r_E\le 11$ have conductors $N_E$ that are close to the smallest possible \cite{EW04}.  This is not likely to be true for the curves for rank $r_E\ge 12$, but we chose curves of smaller conductor when several were available.  In many cases the curves we list are not the first known curve of that rank; see \cite{Dujella2} for a history of rank records.

The sums $S(x)$ plotted for $x\le B = 10^{12}$ in Figure~\ref{fig:one} were computed using the \texttt{smalljac} software library~\cite{KS08} with some further optimizations described in \cites{S07,S11}.  The expected time complexity of this approach is $O(B^{5/4}\log B\log\log B)$.  This is asymptotically worse than both the $O(B(\log B)^{4+o(1)})$ expected time complexity (under GRH) of using the Schoof-Elkies-Atkin algorithm \cite{SS15} and the $O(B(\log B)^3)$ time complexity of an average polynomial-time approach \cite{HS16}, but it is practically much faster for $B=10^{12}$; it took approximately 100 core-days per curve to compute the $S(x)$ plots shown in Figure~\ref{fig:one}.

Figures~\ref{fig:two} and~\ref{fig:three} show similar plots for Mordell curves \cite{Mordell14} $y^2=x^3+k$ of ranks $r_E=0, 1, 2, \ldots, 17$ and congruent number curves \cite{Mordell22} $y^2=x^3-n^2x$ of ranks $r_E=0, 1, 2, \ldots, 7$. The corresponding curves listed in Table~\ref{tab:sources} were chosen to minimize $|k|$ and $n$ among those of a given rank which have appeared in the literature (we chose the sign of $k$ yielding a smaller conductor).  These are not necessarily the first curves of these forms found to achieve these ranks; see \cites{LQ88,Quer87,Womack00} for some earlier examples.

The Mordell curves and congruent number curves have $j$-invariants $0, 1728$ (respectively), and thus admit (potential) complex multiplication by the ring of integers $\mathcal O$ of  $K=\Q(\zeta_3), \Q(i)$.
To efficiently compute $a_p(E)=\mathrm{tr} (\psi_E(\mathfrak p))$ we compute the trace of the Hecke character $\psi_E$ corresponding to $E$ evaluated at a prime~$\mathfrak p$ of $K$ above~$p$; this trace is necessarily zero when $p$ is inert.
For each prime $p\le B$ of good reduction for $E$ that splits in $\mathcal O$ we use Cornacchia's algorithm to compute all integer solutions $(t,v)$ to the norm equation $4p=t^2-v^2\mathrm{disc}(\mathcal O)$.  We then apply the algorithm of Rubin and Silverberg \cite{RS10} to determine the correct choice of $t=a_p$. The algorithm in \cite{RS10} determines the correct twist of an ordinary elliptic curve over~$\mathbb F_p$ with a given $j$-invariant, endomorphism ring and Frobenius trace, but it can also be used to determine the Frobenius trace of an ordinary elliptic curve over $\mathbb F_p$ whose endomorphism ring is known.  This yields an algorithm to compute $a_p(E)$ in $O((\log p)^2\log \log p)$ expected time, meaning we can compute $S(x)$ for $x\le B$ in $O(B \log B\log\log B)$ expected time.  This makes it feasible to plot $S(x)$ for $x\le B=10^{15}$ in Figures~\ref{fig:two} and~\ref{fig:three} in roughly the same time required for $B=10^{12}$ in Figure~\ref{fig:one}, about 100 core-days per curve.

We end with a note of caution regarding the interpretation of these plots as evidence supporting the conjectured convergence of $S(x)$.  The methods used to find the higher rank curves shown in these plots typically use $S(x)$ or a closely related sum as a heuristic method to identify elliptic curves of potentially high rank; see~\cites{Campbell99,Mestre86,Nagao94}.  Most of the curves of rank $r_E\ge 12$ listed in Table~\ref{tab:sources} were discovered precisely because a partial sum related to $S(x)$ suggested they should have large ranks. This is less of a concern for the lower rank curves where searches have been more exhaustive in an effort to minimize $N_E$, $|k|$, or $n$.

\begin{table}[tbh!]
\footnotesize
\renewcommand{\arraystretch}{0.75}
\begin{tabular}[width=\textwidth]{lccll}
$r_E$ & $E(\Q)_{\rm tors}$ & $\log N_E$ & $[a_1,a_2,a_3,a_4,a_6]$ & source \\\midrule
0 & trivial & 2.398 & \href{https://www.lmfdb.org/EllipticCurve/Q/11a3/}{$[0,-1,1,0,0]$} & Birch and Kuyk 1972 \cite{BK75}\\
1 & trivial & 3.611 & \href{https://www.lmfdb.org/EllipticCurve/Q/37a1/}{$[0,0,1,-1,0]$} & Birch and Swinnerton-Dyer 1963 \cite{BSD63}\\
2 & trivial & 5.964 & \href{https://www.lmfdb.org/EllipticCurve/Q/389a1/}{$[0,1,1,-2,0]$} & Brumer and Kramer 1977 \cite{BK77}\\
3 & trivial & 8.532 & \href{https://www.lmfdb.org/EllipticCurve/Q/5077a1/}{$[0,0,1,-7,\ 6]$} & Brumer and Kramer 1977 \cite{BK77}\\
4 & trivial & 12.365 & \href{https://www.lmfdb.org/EllipticCurve/Q/234446a1/}{$[1,-1,0,-79,\ 289]$} & Connell 1999 \cites{Connell99b,EW04}\\
5 & trivial & 16.762 & \href{https://www.lmfdb.org/EllipticCurve/Q/19047851/a/1}{$[0,0,1,-79,\ 342]$} & Brumer and McGuinness 1990 \cite{BM90}\\
6 & trivial & 22.370 & \href{https://math.mit.edu/~drew/rk6.html}{$[1,1,0,-2582,\ 48720]$}& Elkies and Watkins 2004 \cite{EW04}\\
7 & trivial & 26.670 & \href{https://math.mit.edu/~drew/rk7.html}{$[0,0,0,-10012,\ 346900]$}& Elkies and Watkins 2004 \cite{EW04}\\
8 & trivial & 33.151 & \href{https://math.mit.edu/~drew/rk8.html}{$[1,-1,0,-106384,\ 13075804]$}& Elkies and Watkins 2004 \cite{EW04}\\
9 & trivial & 38.008 & \href{https://math.mit.edu/~drew/rk9.html}{$[1,-1,0,-135004,\ 97151644]$}& Elkies and Watkins 2004 \cite{EW04}\\
10 & trivial & 43.768 & \href{https://math.mit.edu/~drew/rk10.html}{$[0,0,1,-16312387,\ 25970162646]$}& Elkies and Watkins 2004 \cite{EW04}\\
11 & trivial & 51.246 & \href{https://math.mit.edu/~drew/rk11.html}{$[0,0,1,-16359067,\ 26274178986]$}& Elkies and Watkins 2004 \cite{EW04}\\
$12^*$ & trivial & 67.767 & \href{https://web.math.pmf.unizg.hr/~duje/tors/rk12.html}{$[0,0,1,-634\cdots 647,\ 193\cdots 036]$} & Mestre 1982 \cite{Mestre82}\\
$13^*$ & trivial & 99.778 & \href{https://math.mit.edu/~drew/rk13.html}{$[1,0,0,-560\cdots 540,\ 529\cdots 600]$} & Nagao 1994 \cite{Nagao94}\\
$14^*$ & trivial & 86.484 & \href{https://web.math.pmf.unizg.hr/~duje/tors/rk14.html}{$[0,0,1,-224\cdots 757,\ 132\cdots 406]$} & Mestre 1986 \cite{Mestre86}\\
$15^*$ & trivial & 129.440 & \href{https://web.math.pmf.unizg.hr/~duje/tors/rk15.html}{$[1,0,0,-209\cdots 485,\ 266\cdots 897]$} & Mestre 1992 \cite{Mestre92}\\
16 & $\Z/2\Z$ & 139.095 & \href{https://web.math.pmf.unizg.hr/~duje/tors/z2old1415161718.html}{$[1,0,0,888\cdots 054,\ 398\cdots 0420]$} & Dujella 2009 \cite{Dujella1}\\
$17^*$ & trivial & 136.210 & \href{https://web.math.pmf.unizg.hr/~duje/tors/rk17.html}{$[0,1,0,-184\cdots 145,\ 966\cdots 743]$} & Nagao 1992 \cite{Nagao92}\\
18 & $\Z/2\Z$ & 149.798 & \href{https://web.math.pmf.unizg.hr/~duje/tors/z2old1415161718.html}{$[1,0,0,-171\cdots 215,\ 445\cdots 817]$} & Elkies 2009 \cite{Dujella1}\\
$19^*$ & trivial & 149.986 & \href{https://web.math.pmf.unizg.hr/~duje/tors/rk19.html}{$[1,-1,1,-206\cdots 978,\ 328\cdots 881]$} & Fermigier 1992 \cite{F92}\\
$20^*$ & trivial & 170.088 & \href{https://web.math.pmf.unizg.hr/~duje/tors/rk20.html}{$[1,0,0,-431\cdots 166,\ 515\cdots 196]$} & Nagao 1993 \cite{Nagao93}\\
$21^*$ & trivial & 196.680 & \href{https://web.math.pmf.unizg.hr/~duje/tors/rk21.html}{$[1,1,1,-215\cdots 835\ -194\cdots 535]$} & Nagao and Kouya 1994 \cite{NK94}\\
$22^*$ & trivial & 182.725 & \href{https://web.math.pmf.unizg.hr/~duje/tors/rk22.html}{$[1,0,1,-940\cdots 864,\ 107\cdots 362]$} & Fermigier 1996 \cite{F96}\\
$23^*$ & trivial & 205.061 & \href{https://web.math.pmf.unizg.hr/~duje/tors/rk23.html}{$[1,0,1,-192\cdots 723,\ 326\cdots 006]$} & Martin and McMillen 1998 \cite{MM98}\\
$24^*$ & trivial & 219.927 & \href{https://web.math.pmf.unizg.hr/~duje/tors/rk24.html}{$[1,0,1,-120\cdots 374,\ 504\cdots 116]$} & Martin and McMillen 2000 \cite{MM00}\\
$25^{**}$ & trivial & 229.186 & \href{https://math.mit.edu/~drew/rk25.html}{$[1,0,0,-122\cdots 200,\ 523\cdots 000]$} & Elkies 2006 \cite{Elkies21a}\\
$26^{**}$ & trivial & 247.860 & \href{https://math.mit.edu/~drew/rk26.html}{$[1,0,0,-271\cdots 190,\ 167\cdots 092]$}& Elkies 2006 \cite{Elkies21a}\\
$27^*$ & trivial & 287.013 & \href{https://web.math.pmf.unizg.hr/~duje/tors/rk27.html}{$[1,0,0,-556\cdots 970,\ 161\cdots 956]$} & Elkies 2006 \cite{KSW}\\
$28^*$ & trivial & 368.407 & \href{https://web.math.pmf.unizg.hr/~duje/tors/rk28.html}{$[1,-1,1,-200\cdots 502,\ 344\cdots 429]$} & Elkies 2006 \cite{Elkies06}\\
\midrule
0 & $\Z/6\Z$ & 3.584 & \href{https://www.lmfdb.org/EllipticCurve/Q/36a1}{$[0,0,0,0,1]$} & Euler 1747 \cite{Euler}*{Theorem 10}\\
1 & trivial & 7.455 & \href{https://www.lmfdb.org/EllipticCurve/Q/1728a1}{$[0,0,0,0,2]$} & Billing 1938 \cite{Billing38}, Cassels 1950 \cite{Cassels50}\\
2 & trivial & 9.478 & \href{https://www.lmfdb.org/EllipticCurve/Q/13068k1}{$[0,0,0,0,-11]$} & Billing 1938 \cite{Billing38}, Cassels 1950 \cite{Cassels50}\\
3 & trivial & 14.137 & \href{https://math.mit.edu/~drew/mrk3.html}{$[0,0,0,0,113]$} & Birch and Swinnerton-Dyer 1963 \cite{BSD63}\\
4 & trivial & 18.872 & \href{https://math.mit.edu/~drew/mrk4.html}{$[0,0,0,0,2089]$} &  Gebel, Petho, Zimmer 1998 \cite{GPZ98}\\
5 & trivial & 24.083 & \href{https://math.mit.edu/~drew/mrk5.html}{$[0,0,0,0,-28279]$} &  Gebel, Petho, Zimmer 1998 \cite{GPZ98}\\
6 & trivial & 31.540 & \href{https://math.mit.edu/~drew/mrk6.html}{$[0,0,0,0,1358556]$} & Womack 2000 \cite{Womack00}\\
7 & trivial & 39.296 & \href{https://math.mit.edu/~drew/mrk7.html}{$[0,0,0,0,-56877643]$} &  Womack 2000 \cite{Womack00}\\
8 & trivial & 45.493 & \href{https://math.mit.edu/~drew/mrk8.html}{$[0,0,0,0,-2520963512]$} & Womack 2000 \cite{Womack00} \\
9 & trivial & 52.637 & \href{https://math.mit.edu/~drew/mrk9.html}{$[0,0,0,0,-44865147851]$} & Elkies 2009 \cite{Elkies09} \\
10 & trivial & 61.126 & \href{https://math.mit.edu/~drew/mrk10.html}{$[0,0,0,0,3612077876156]$} & Elkies 2009 \cite{Elkies09}\\ 
11 & trivial & 72.659 & \href{https://math.mit.edu/~drew/mrk11.html}{$[0,0,0,0,-998820191314747]$} & Elkies 2009 \cite{Elkies09}\\
12 & trivial & 80.089 & \href{https://math.mit.edu/~drew/mrk12.html}{$[0,0,0,0,41025014649039529]$} & Elkies 2009 \cite{Elkies09}\\
13 & trivial & 87.294 & \href{https://math.mit.edu/~drew/mrk13.html}{$[0,0,0,0,48163745551486811536]$} & Elkies 2009 \cite{Elkies09}\\
14 & trivial & \!\!\!103.188\!\!\! & \href{https://math.mit.edu/~drew/mrk14.html}{$[0,0,0,0,785\cdots 336]$} & Elkies 2009 \cite{Elkies09}\\
15 & trivial & \!\!\!122.905\!\!\! & \href{https://math.mit.edu/~drew/mrk15.html}{$[0,0,0,0,469\cdots 417]$} & Elkies 2009 \cite{Elkies09}\\
16 & trivial & \!\!\!136.203\!\!\! & \href{https://math.mit.edu/~drew/mrk16.html}{$[0,0,0,0,116\cdots 888]$} & Elkies 2016 \cite{Elkies16a}\\
$17^*$ & trivial & \!\!\!155.363\!\!\! & \href{https://math.mit.edu/~drew/mrk17.html}{$[0,0,0,0,-908\cdots 363]\!\!\!\!$} & Elkies 2016 \cite{Elkies16b}\\
\midrule
0 & $(\Z/2\Z)^2$ & 3.466 & \href{https://www.lmfdb.org/EllipticCurve/Q/32a2}{$[0,0,0,-1^2,0]$} & Fermat 1659 \cite{Fermat1659}\\
1 & $(\Z/2\Z)^2$ & 6.685 & \href{https://www.lmfdb.org/EllipticCurve/Q/800a1}{$[0,0,0,-5^2,0]$} & Billing 1938 \cite{Billing38}\\
2 & $(\Z/2\Z)^2$ & 9.825 & \href{https://www.lmfdb.org/EllipticCurve/Q/18496i2}{$[0,0,0,-34^2,0]$} & Wiman 1945 \cite{Wiman45}\\
3 & $(\Z/2\Z)^2$ & 17.041 & \href{https://math.mit.edu/~drew/crk3.html}{$[0,0,0,-1254^2,0]$} & Wiman 1945 \cite{Wiman45}\\
4 & $(\Z/2\Z)^2$ & 23.341 & \href{https://math.mit.edu/~drew/crk4.html}{$[0,0,0,-29274^2,0]$} & Wiman 1945 \cite{Wiman45}\\
5 & $(\Z/2\Z)^2$ & 38.850 & \href{https://math.mit.edu/~drew/crk5.html}{$[0,0,0,-48272239^2,0]$} & Rogers 2004 \cite{Rogers04}\\
6 & $(\Z/2\Z)^2$ & 47.997 & \href{https://math.mit.edu/~drew/crk6.html}{$[0,0,0,-6611719866^2,0]$} & Rogers 2004 \cite{Rogers04}\\
7 & $(\Z/2\Z)^2$ & 58.275 & \href{https://math.mit.edu/~drew/crk7.html}{$[0,0,0,-797507543735^2,0]$} & Rogers 2004 \cite{Rogers04}\\
\bottomrule
\end{tabular}
\smallskip

\caption{Elliptic curves listed by Mordell-Weil rank $r_E$.  Asterisks (double asterisks) indicate ranks conditional on GRH (GRH and BSD).}\label{tab:sources}
\end{table}

\FloatBarrier

\begin{figure}[tbh!]
\includegraphics[scale=0.95]{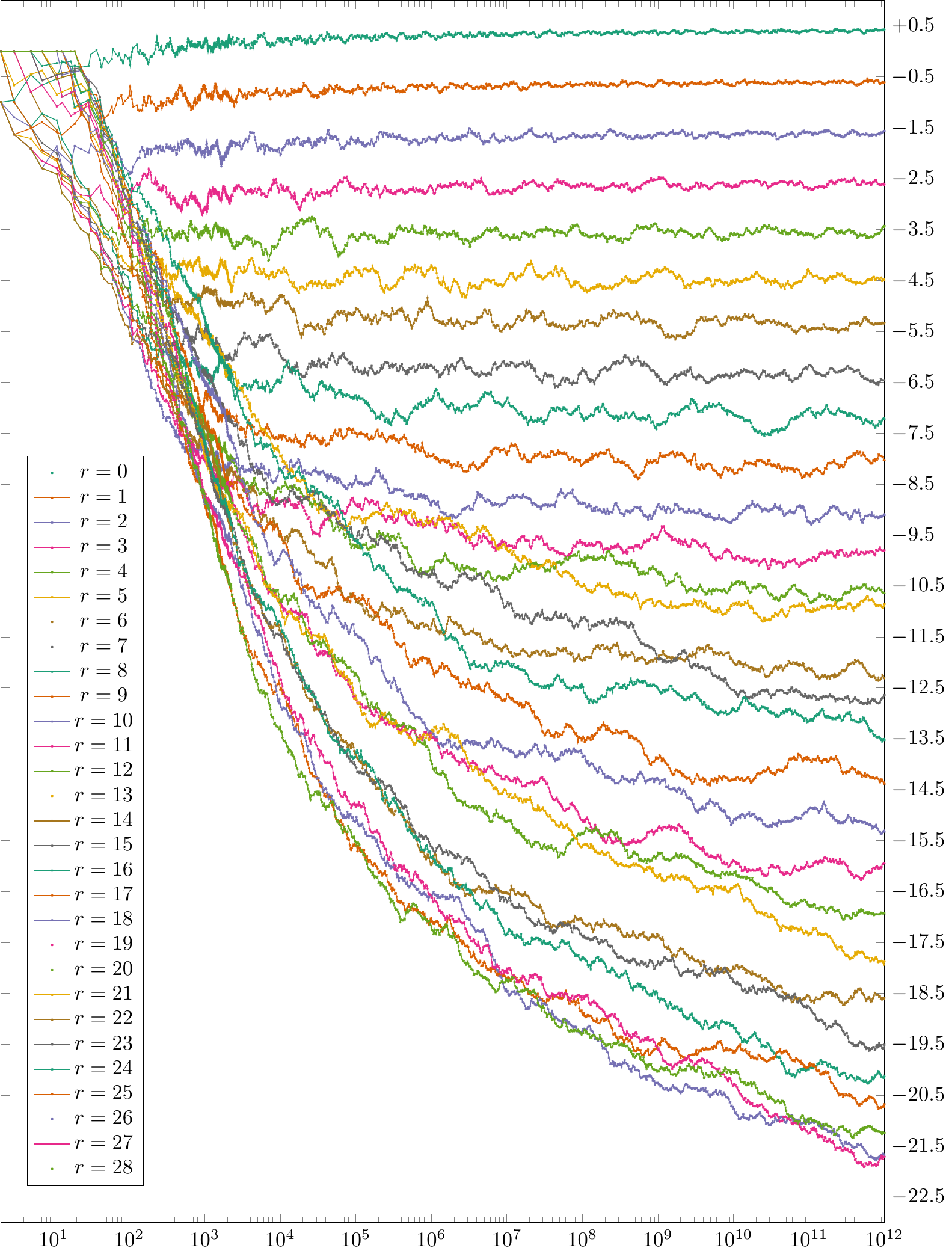}
\vspace{-8pt}
\caption{$S(x)$ plot for elliptic curves of rank $r=r_E$ listed in Table~\ref{tab:sources}.}\label{fig:one}
\end{figure}

\begin{figure}[t]
\includegraphics[scale=0.95]{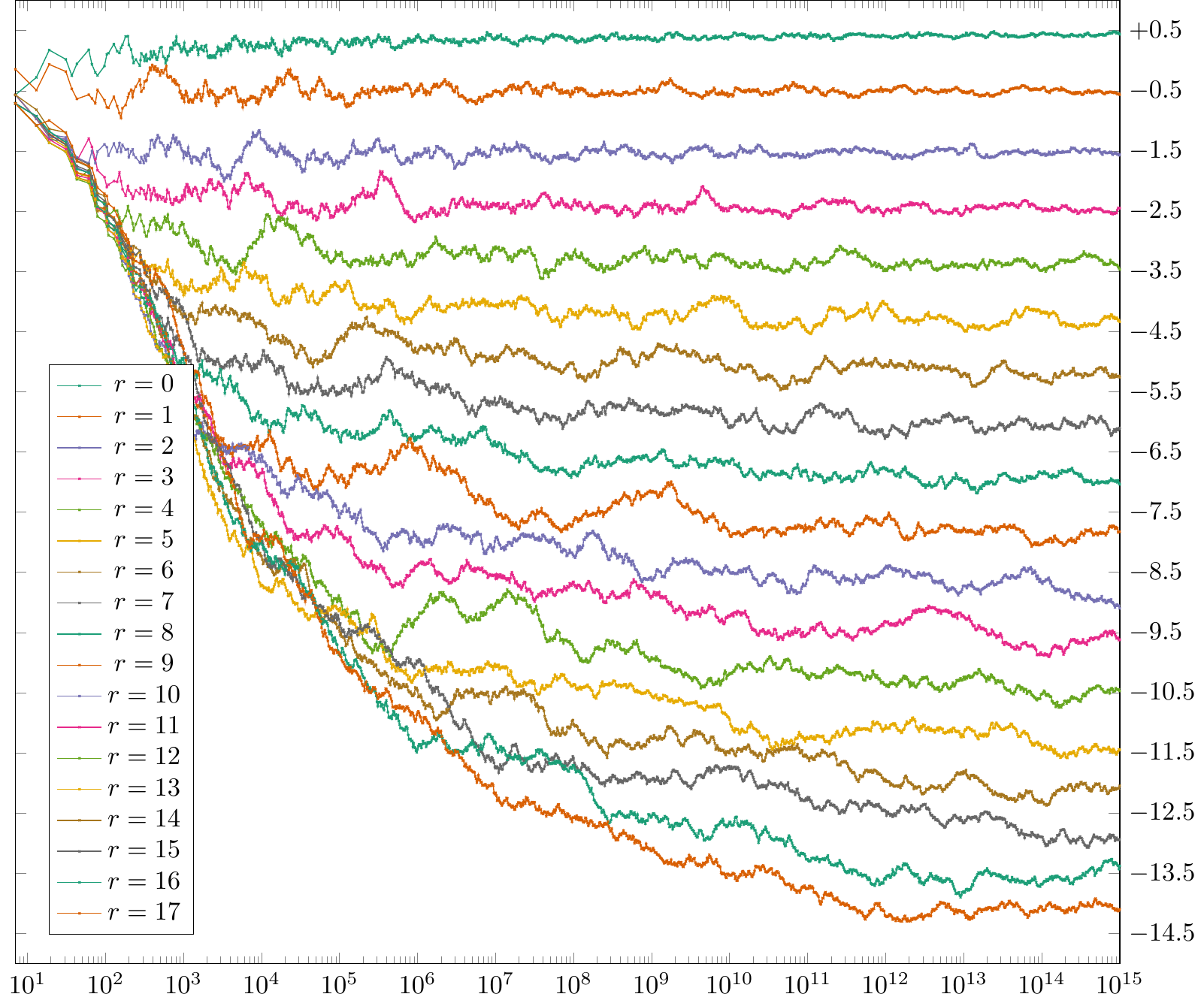}
\vspace{-12pt}
\caption{$S(x)$ plot for Mordell curves $y^2=x^3+k$ of rank $r=r_E$ listed in Table~\ref{tab:sources}.}\label{fig:two}
\vspace{10pt}
\end{figure}

\begin{figure}[b]
\includegraphics[scale=0.95]{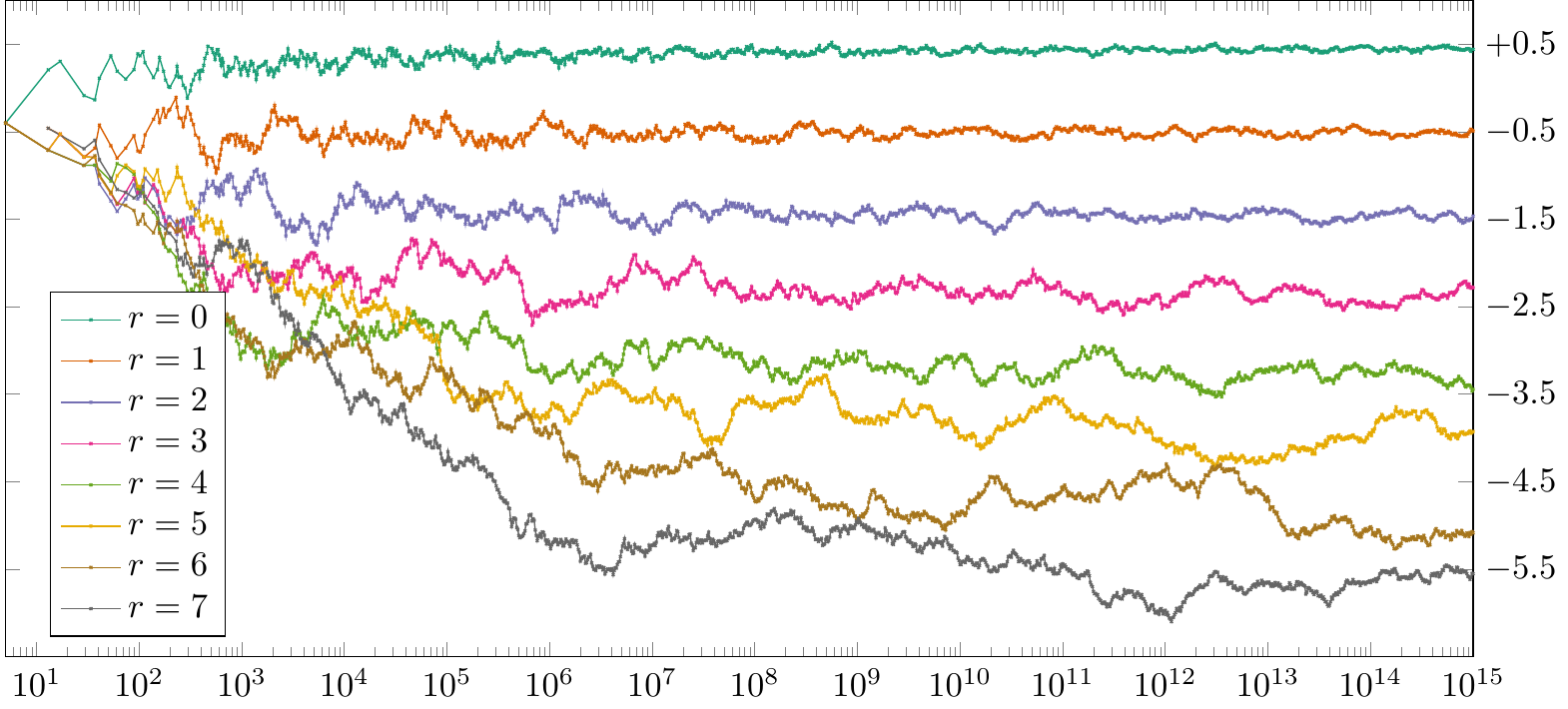}
\vspace{-12pt}
\caption{$S(x)$ plot for congruent number curves $y^2=x^3-n^2x$ of rank $r=r_E$ listed in Table~\ref{tab:sources}.}\label{fig:three}
\end{figure}

\FloatBarrier

\end{document}